\documentclass[10pt,a4paper]{article}

\usepackage[english]{babel}
\usepackage[utf8x]{inputenc}
\usepackage{lmodern}
\usepackage[T1]{fontenc}
\usepackage{lmodern} 

\usepackage[a4paper,top=3cm,bottom=2cm,left=3cm,right=3cm,marginparwidth=1.75cm]{geometry}

\usepackage{amsmath}
\usepackage{amsthm}
\usepackage{thmtools}
\usepackage{thm-restate}
\usepackage{graphicx}
\usepackage[hidelinks]{hyperref} 
\usepackage{amscd}     
\usepackage{amsfonts}
\usepackage{dsfont}
\usepackage{latexsym}
\usepackage{amscd}
\usepackage{verbatim}
\usepackage{appendix}
\usepackage{subfigure}

\usepackage{amsmath}
	\numberwithin{equation}{section}
\usepackage{amssymb}
\usepackage{amsthm}
	\theoremstyle{plain}
		\newtheorem{thm}{Theorem}[section]
		\newtheorem{coro}[thm]{Corollary}
		\newtheorem{lem}[thm]{Lemma}
		\newtheorem{prop}[thm]{Proposition}
	\theoremstyle{definition}

	\theoremstyle{remark}
		\newtheorem{rmk}[thm]{Remark}

\usepackage{mathtools} 
\usepackage{mathrsfs} 
\usepackage{eucal} 

\usepackage{microtype}

\usepackage{mparhack}
	
\usepackage{xspace}

\usepackage{caption}
	\captionsetup{font=small}

\usepackage{varioref}		
\usepackage{enumitem}
\usepackage{tikz} 
\usepackage{tikz-cd}

\newcommand{\N}{\mathbb{N}}
\newcommand{\Z}{\mathbb{Z}}
\newcommand{\K}{\mathbb{K}}

\newcommand{\F}{\mathbb{F}}

\renewcommand{\=}{\coloneqq}	 

	
\DeclareMathOperator{\im}{im}	
\DeclareMathOperator{\id}{id}
	
\DeclareMathOperator{\coker}{coker}	
		
\DeclareMathOperator{\mer}{mer}	
\DeclareMathOperator{\Tor}{Tor}

\linespread{1.25}

\title{Morse inequalities for the Koszul complex of multi-persistence}
\author{Andrea Guidolin\footnote{KTH Stockholm, Sweden}\\ \texttt{guidolin@kth.se}
\and Claudia Landi\footnote{University of Modena and Reggio Emilia, Italy}\\
\texttt{claudia.landi@unimore.it}}

\date{}

\begin{document}
\maketitle

\begin{abstract}
In this paper, we define the homological Morse numbers of a filtered cell complex  in terms of relative homology of nested filtration pieces, and derive inequalities relating these numbers to the Betti tables of the multi-parameter persistence modules of the considered filtration. Using the Mayer-Vietoris spectral sequence we first obtain strong and weak Morse inequalities involving the above quantities, and then we improve the weak inequalities achieving a sharp lower bound for homological Morse numbers. Furthermore, we prove a sharp upper bound for homological Morse numbers, expressed again in terms of the Betti tables.
\\

\noindent
{\em MSC:} 55N31, 55U15, 13D02, 57Q70\\

\noindent
{\em Keywords:} Persistence module, Mayer-Vietoris spectral sequence, multigraded Betti numbers, Euler characteristic, homological Morse numbers
 
\end{abstract}

\section{Introduction}
Topological data analysis \cite{carlsson2009topology} relies on algebraic topology to extract complex information from data, with the aim of recognizing complicated patterns, inferring the topological structure underlying the data or, more generally, complementing and enhancing standard techniques in data analysis. Persistent homology, one of the most successful methods of topological data analysis, summarizes the topology of the data at multiple scales, controlled by one measured parameter, and produces informative signatures based on homology \cite{oudot2015persistence}. As understanding complex correlations in multivariate data is one of the general goals of data analysis, the development of a multivariate version of persistent homology, called multi-parameter persistence or multi-persistence \cite{carlsson2009theory} is drawing increasing interest.

In persistent homology and multi-parameter persistence, algebraic objects called persistence modules are used to encode the data in a suitable way for the extraction of topological invariants.
Typically, persistence modules are obtained from datasets endowed with measurements via an intermediate step, in which a combinatorial or topological object (e.g., a simplicial complex or, more generally, a cell complex) is constructed from the data points. As data are discrete, without loss of generality from the algebraic point of view, we can use elements of $\Z^n$ ($n\ge 1$) to encode the values of  $n$ measurements. Filtering the given data according to increasing values of the measurements (in the coordinate-wise partial order of $\Z^n$), and applying homology, one obtains the corresponding $n$-parameter persistence module \cite{oudot2015persistence}. 

In the literature, the study of persistence modules has been tackled mainly from two different perspectives. In the first works appeared in the literature about persistence theory, a dataset was encoded as a manifold, and the measurements on it as a Morse function on it. In this perspective, persistence can be grounded in Morse theory, with persistence modules determined by pairs of critical points of functions that give  {\em birth} and {\em death} to a topological feature \cite{Barannikov94,frosini1996connections,Robins2000, Edelsbrunner2002}. This paradigm has also a combinatorial counterpart developed for accelerating computations and based on discrete Morse theory  \cite{mischaikow2013morse, allili2017reducing}. In the combinatorial setting the role of critical points is played by critical cells.

From a different standpoint, it was soon realized that the persistent homology of a filtered finite simplicial  
complex is simply  a particular graded module over a polynomial ring \cite{zomorodian2005computing}. Therefore, forgetting about the data and the measurements that yield  them, persistence modules  can be studied using  tools of commutative algebra. In particular,  new invariants for persistence modules can be obtained by considering the Betti tables\footnote{In commutative algebra Betti tables are often called  (multi-graded) Betti numbers, but we prefer avoiding calling them so to avoid confusion with the Betti numbers  of persistent homology groups.} of a minimal 
free resolution of theirs \cite{carlsson2009theory}. There is, however, an important difference between the study of $n$-parameter persistence modules and the classical commutative algebra approach to graded modules: $n$-parameter persistence modules are usually obtained as the homology of a cell complex associated with data, and the connection between the algebraic object and the underlying cell complex is part of the investigation. The focus of the present article is precisely on the relation between the invariants of $n$-parameter persistence modules and the filtered cell complex from which it is obtained.

When the number of filtering parameters $n$ is equal to 1, this dual perspective is easily interpreted by noticing that births are captured by the 0th Betti table  and deaths by the 1st Betti table of  persistence modules. Persistence theory is thus a canonical way of pairing births and deaths.

For $n\ge 2$ things get more complicated as  there is no way of paring births and deaths in a natural way, and Betti tables do not mirror entrance of critical cells in the filtration. In \cite{knudson2008refinement}, this is heuristically explained  by the presence of {\em virtual} critical cells.

Borrowing from the terminology of discrete Morse theory,  where  Morse numbers are defined as the number of critical cells in the various dimensions, we introduce the notion of a {\em homological Morse number} (see below and Section~\ref{subsec:multipers}) to mean the dimension of the homology of a piece of the filtration relative to the union of all the pieces entered before. 

The main goal of the present paper is to provide insights on the interplay between the values of the Betti tables of a persistence module in any number $n\ge 1$ of parameters, and the homological Morse numbers of an $n$-filtration inducing it. 

In order to do so, we consider persistence modules obtained  applying the $q$th homology functor to an $n$-parameter filtration $\{ X^u\}_{u\in \Z^n}$ of a finite cell complex $X$. 
The corresponding $p$th Betti table $\xi_p^q\colon\Z^n\to \N$ is  obtained as the $p$th homology of the Koszul complex associated with the persistence module, a strategy already used in \cite{knudson2008refinement} and later in \cite{lesnick2019computing}. 
As for the homological Morse numbers $\mu_q(u)$ of degree $q$ at grade $u\in\Z^n$ of the filtration, we define them as the dimension of the homology at grade $u$ relative to the previous grades:  $\mu_q(u):=\dim H_q(X^u,\cup_jX^{u-e_j})$. 
Informally, $\mu_q(u)$ is the number of critical cells of $X^u-\cup_jX^{u-e_j}$ whose entrance is reflected in a change of $q$th homology. More formally, the homological Morse number $\mu_q(u)$ can be seen as the ``natural'' lower bound, in terms of homology, for the number of critical cells of dimension $q$ entering at grade $u$ (see Section~\ref{subsec:multipers}).

Under these assumptions, we present inequalities relating the homological Morse numbers $\mu_q(u)$ of a multi-filtration of a cell complex and the homology invariants of the Koszul complex of the persistence module obtained from it. In other words, we obtain Morse-type inequalities for multi-parameter persistence.

We start with the {\em strong Morse inequalities} according to which  an alternating sum  of the entries of the Betti tables of the persistence modules of $X$  at grade $u\in \Z^n$ (where the summation is on the homology degrees while the filtration grade is fixed) is bounded from above by  an alternating sum of the homological Morse numbers of the filtration of $X$ at grade $u$. This is Theorem \ref{thm:strong_relative_xi}:\\

\noindent {\bf Theorem.} For each $q\ge 0$, and each fixed grade $u\in \Z^n$, we have
\[
\sum_{i=0}^q (-1)^{q+i} \mu_i (u)  \ge \sum_{i=0}^q (-1)^{q+i} \left( \xi_0^i(u) - \sum_{p=1}^{i+1}\xi_p^{i+1-p}(u)    \right) .
\]
\\

As usual, the strong inequalities imply the {\em weak Morse inequalities}, 
given in Corollary \ref{coro:weak_relative_xi}:\\

\noindent{\bf Corollary.}
For each $q\ge 0$, and each fixed grade $u\in\Z^n$, we have
\[
\mu_q (u) \ge   \xi_0^{q}(u) - \sum_{p=1}^{q+1}\xi_p^{q+1-p}(u).
\]

Moreover, we can define the {\em Euler characteristic} of a filtration at grade $u$ by considering the relative homology of $(X^u,\cup_j X^{u-e_j})$ and setting
 $$\chi (X^u,\cup_j X^{u-e_j}):=\sum_{q} (-1)^q \dim H_q (X^u,\cup_j X^{u-e_j}). $$
It is interesting to see how this notion of Euler characteristic of a filtration relates to the Betti tables of its persistence modules. As shown in Theorem \ref{thm:chi_relative_betti},  for $q$ large enough, the $q$th strong Morse inequality is actually an equality:\\
 
\noindent{\bf Theorem.}
$\chi (X^u,\cup_j X^{u-e_j}) = \sum_{i=0}^{\dim X+1} (-1)^i \sum_{p=0}^i \xi_p^{i-p}(u)$.\\

The weak Morse  inequalities of Corollary \ref{coro:weak_relative_xi} are too weak to be also sharp. In order to achieve sharpness, we improve them by proving  Theorem \ref{thm:lower}, which  can be summarized as follows.\\ 

 \noindent{\bf Theorem.}
For an $n$-parameter filtration $\{ X^u \}_{u\in \Z^n}$, for each grade $u\in \Z^n$, and for each $q\ge 0$, 
\begin{equation*}
\mu_q (u) \ge \xi^q_0 (u) + \xi^{q-1}_1 (u) - \sum_{p=1}^{n-1} \xi_{p+1}^{q-p} (u)+R, 
\end{equation*}
where
$R$ is a non-negative integer.\\
 
Examples are provided to show that these estimates are sharp for every number of parameters $n$. In particular, these inequalities show that a non-trivial $p$th Betti table at $u$  does not need the entrance of a critical cell at $u$ as the presence of positive and negative terms in the right-hand side compensate each other. This is different from what happens when $n=1$. Indeed, in the case of a single parameter, the above inequalities reduce to $\mu_q(u)\ge \xi^q_0 (u)+ \xi^{q-1}_1 (u)$. Since in one-parameter persistence $\xi^q_0$ counts the number of persistence births  in  homology of degree $q$, and  $\xi^{q-1}_1$ counts the number of persistence death in homology of degree $q-1$,  these inequalities say that, when $n=1$, in order to have a birth or a death we necessarily need the entrance of a critical cell, as is well known.

On the other hand, in Theorem \ref{thm:upper} we also present  the inverse inequalities,  proven to be sharp as well, showing that 
a non-zero homological Morse number
at grade $u$ of the filtration necessarily causes some Betti table to become non-trivial:\\

\noindent{\bf Theorem.} For an $n$-parameter filtration $\{ X^u \}_{u\in \Z^n}$, for each grade $u\in \Z^n$, and for each $q\ge 0$, we have
\[ \mu_q(u)  \le \sum_{i=0}^n \xi_i^{q-i}(u) .
\]
We observe that when $n=1$ these new bounds reduce to $\mu_q(u)\le \xi^q_0 (u)+ \xi^{q-1}_1 (u)$. Here it is important to underline that these inequalities hold thanks to the fact that we are considering homological Morse numbers and not the numbers of all critical cells entering at $u$.

All the inequalities provided in this paper are obtained via the Mayer-Vietoris spectral sequence associated with a double complex built from the filtration of $X$. This strategy generalizes that of \cite{landi2021discrete} where the particular case of $n=2$ is studied applying the Mayer-Vietoris homology exact sequence. In particular, differently from papers like \cite{Lipsky2011,govc2018approximate,casas2019distributing}, where the Mayer-Vietoris spectral sequence is used in the context of single-parameter persistent homology to merge local data into global information, we work only locally at a fixed grade $u$ of the multi-parameter filtration, but considering all the possible homology degrees.

At a basic level, our inequalities prove that a persistence module  having \lq\lq large'' Betti tables at grade $u$ does not necessarily come from a filtration with a large number of critical cells entering at $u$ unless the $p$th Betti tables with $p\ge 2$ are trivial. On the other hand, a large homological Morse number necessarily implies large values in the Betti tables of specific indices. 

This is a more diversified behavior than the case $n=1$, in which we have $\mu_q(u)=\xi^q_0 (u)+ \xi^{q-1}_1 (u)$ by combining the inequalities introduced above. In other words, the homological Morse number of degree $q$ is equal to the number of births in  degree $q$ plus the number of deaths in  degree $q-1$.

At a higher level, we believe our results are interesting from different perspectives. Firstly, in contrast to the state of the art in the literature where multi-parameter persistence modules are usually studied as obtained by applying homology in an arbitrary but fixed degree, our inequalities show the interplay among persistent homology modules at all the various homology degrees simultaneously. In particular, we can see that, contrary to one-parameter persistence where the entrance of a critical cell of dimension $q$ can modify only the persistence modules in degree $q$ or $q-1$, for multi-parameter persistence the effect of its entrance may involve many more homology degrees.

Secondly, starting from Knudson's observation  in \cite{knudson2008refinement} about the fact that the Betti tables in multi-parameter persistence are determined not only by elements corresponding to real critical cells in the filtration, but also by elements corresponding to virtual cells, our inequalities allow for a  measure of the gap between the number of real critical cells and that of virtual critical cells.

\bigskip
\noindent {\em Organization of the article.} 
In Section \ref{sec:prelim} we review the needed background on cell complexes, Morse inequalities and persistence modules, and provide a brief description of the Mayer-Vietoris spectral sequence.
In Section \ref{sec:koszul_cmp} we describe the Koszul complex of a multi-parameter persistence module and its Betti tables.
In Section \ref{sec:setupMV} we introduce the Mayer-Vietoris spectral sequence of a multi-parameter filtration and show its relation with the Betti tables. 
In Section \ref{sec:strong} we derive Morse inequalities for multi-parameter persistence modules, which are applied in Section \ref{sec:euler} to obtain Euler characteristic formulas for the relative homology of the filtration.
In Section \ref{sec:improve} we improve our Morse inequalities using the Mayer-Vietoris spectral sequence and show the sharpness of our lower and upper bound for homological Morse numbers in terms of the Betti tables.

\section{Preliminaries}
\label{sec:prelim}

\subsection{Chain complexes, cell complexes and homology}
\label{subsec:chaincm}

Let $\F$ be a fixed field. In this work, we consider bounded finitely generated chain complexes $C_* = (C_q,\partial_q)_{q\in \Z}$ over $\F$, simply called  {\em chain complexes}, meaning that $C_q =0$ whenever $q<0$ or $q\ge m$, for some $m\in \N$, and each $C_q$ is a finite dimensional vector space over $\F$. Let us further assume that a distinguished (finite) $\F$-basis $X_q$ of each $C_q$ is given, so that  $C_q \cong \bigoplus_{\sigma \in X_q} \F \sigma$. A chain complex $C_*$ endowed with such distinguished bases is called a \emph{based chain complex}. We use the notation $C_* (X) = (C_q(X),\partial_q)_{q\in \Z}$ to explicitly recall the fixed bases $X$ of $C_*$. We can express the differentials $\partial_q : C_q(X) \to C_{q-1}(X)$ with respect to the fixed bases as 
\[ \partial_q (\tau) = \sum_{\sigma\in X_{q-1}} \kappa (\tau ,\sigma) \sigma
\]
for each $\tau\in X_q$;
in other words, for each $\tau \in X_q$ and $\sigma \in X_{q-1}$, we denote by $\kappa (\tau ,\sigma)$ the coefficient with which $\sigma$ appears in $\partial_q(\tau)$.

The distinguished bases of $C_*$ inherit a combinatorial structure which coincides with the abstract notion of a cell complex  as introduced by Lefschetz \cite{lefschetz1942algebraic} (see also \cite{Harker2014}).
In topological data analysis, considering this equivalent combinatorial perspective is sometimes advantageous, since a cell complex is usually constructed from the data and hence interpretable in the concrete situation at hand.
A \emph{cell complex} is a finite graded set $X=\bigsqcup_{q\in \Z} X_q$, whose elements are called \emph{cells}, endowed with an \emph{incidence function} $\kappa : X\times X \to \F$. A cell $\sigma \in X_q$ is called a $q$-\emph{cell} or a cell of \emph{dimension} $q$, denoted $\dim \sigma = q$. The \emph{dimension of} $X$ is defined as the maximum dimension of its cells. The incidence function must satisfy the following conditions: 
(\textit{i}) $\kappa (\tau , \sigma) \ne 0$ implies $\dim \tau = \dim \sigma + 1$, and 
(\textit{ii}) for each $\tau$ and $\sigma$ in $X$, it holds $\sum_{\rho\in X}\kappa (\tau , \rho)\cdot \kappa (\rho ,\sigma)=0$. We regard $X$ as the graded poset endowed with the partial order $\le$ generated by the covering relation $\sigma < \tau$ whenever $\kappa (\tau , \sigma) \ne 0$.

We underline that based chain complexes constitute a rather general setting, since chain complexes canonically associated with many combinatorial or topological objects (such as simplicial complexes, cubical complexes, finite CW complexes) fall within this definition.

As an example, an \emph{abstract simplicial complex} $\Delta$ given by  a  collection of non-empty finite  subsets of a
given set $S$, with the property of being  closed under taking subsets, can be regarded as a cell complex as follows: each $\sigma\in \Delta$ containing $q+1$ elements can be viewed as a $q$-cell, and called a  $q$-\emph{simplex}, and in particular singletons are called vertices.  Fixing an ordering for vertices induces an ordering on the elements of each simplex, and one can define the incidence function 
\[ \kappa (\tau , \sigma)  \= \left\{
       \begin{array}{ll}
            (-1)^i & \quad \text{if $\sigma$ is obtained from $\tau$  by removing its $(i+1)$th element} \\
            0 & \quad \text{otherwise}
        \end{array}
    \right.
\]
which induces the usual simplicial boundary map.

A collection of subsets $A_q \subseteq X_q$ freely generates a chain subcomplex $C_* (A) \subseteq C_* (X)$ if and only if $A=\bigsqcup_{q\in \Z} A_q$ is a \emph{subcomplex} of $X$, meaning that, endowed with the restriction of the incidence function of $X$, it is a cell complex in its own right.    Given a cell  complex $X$ and a subcomplex $A\subseteq X$, the relative chain complex $C_*(X,A)$ is defined as the chain complex $(C_q(X)/C_q(A),\partial'_q)_{q\in \Z}$, with $\partial'_q$ being the differential induced by $\partial_q$ on the quotient. 

Applying $q$th homology to a chain complex $C_*$ gives the $\F$-module $H_q(C_*) = \ker \partial_q / \im \partial_{q+1}$,  denoted $H_q(X)$ if the chain complex has a distinguished basis $X$. Analogously, the notation $H_q(X,A)$ is used for homology of a relative chain complex $C_*(X,A)$. In this paper, homology is always assumed to be over a fixed field $\F$, so that taking homology or relative homology of a complex always gives (finite-dimensional) $\F$-vector spaces. 

\subsection{Standard Morse inequalities}
\label{sec:Morse-inequalities}

Given a (non-negatively graded) chain complex $C_* = (C_q , \partial_q)$ and setting $c_q \= \dim C_q$, the \emph{strong Morse inequalities} are: 
\begin{equation}
\label{eq:strongCd}
\sum_{i=0}^q(-1)^{q+i}c_i \ge \sum_{i=0}^q(-1)^{q+i} \dim H_i(C_*),
\end{equation}
for all $q\ge 0$. These inequalities are obtained via standard linear algebra by observing that the equations 
\[
\dim H_i(C_*) = \dim \ker \partial_{i} - \dim \im \partial_{i+1} = c_i - \dim \im \partial_{i} - \dim \im \partial_{i+1}, 
\]
for all $i$, imply that the difference between the left-hand side and the right-hand side of (\ref{eq:strongCd}) is $\dim \im \partial_{q+1}$.
Strong inequalities imply  \emph{weak Morse inequalities}: 
$c_q \ge \dim H_q(C_*)$ for all $q\ge 0$.
As is well known, they are obtained simply by observing that $c_q = \sum_{i=0}^q (-1)^{q+i}c_i + \sum_{i=0}^{q-1} (-1)^{q-1+i}c_i$ and applying the corresponding strong inequalities.

Moreover, if $C_*$ is bounded, for values of $q$ sufficiently large the strong inequalities are actually equalities involving the Euler characteristic $\chi(C_*)\=\sum_{q\ge 0}(-1)^q c_q$ of $C_*$: it holds that
\[
\chi (C_*) =  \sum_{q\ge 0}(-1)^q \dim H_q(C_*).
\]

Weak Morse inequalities represent constraints on the number of generators of a chain complex $C_*$, which can be improved by replacing $C_*$ with a chain complex quasi-isomorphic to it with less generators.
This strategy is used, for example, in \cite{Forman1998} where,  endowing a regular cell complex  $X$ with a discrete Morse function $f$, $C_*(X)$ is shown to be quasi-isomorphic to  the Morse complex containing only the  critical cells of $f$. Thus, $c_q$ can be taken to  coincide with the number of  critical cells of $f$ with dimension $q$. Similarly, in the case of a PL Morse function defined on a simplicial complex, strong and weak inequalities hold with $c_q$ being the number of critical vertices of index $q$ \cite{edelsbrunner2010computational}.

\subsection{Multi-filtrations and multi-parameter persistence}
\label{subsec:multipers}

Persistent homology was originally introduced as a method to encode in a single object the evolution of the homology of a family of nested cell complexes (usually simplicial complexes) parametrized by a linearly ordered set of indexes, such as the integers or the reals \cite{Barannikov94,frosini1996connections,Robins2000,Edelsbrunner2002}. Later it  became clear that families of nested complexes parametrized over other sets of indices can be equally relevant (see, e.g., \cite{oudot2015persistence}  for a review). In particular, \emph{multi-persistence} \cite{carlsson2009theory} treats the case of integer  parameters along multiple directions, that is a grid. This is the setting we consider here.

For an integer $n\ge 1$, indicating the grid dimension, we denote by 
$[n]$ the set $\{1, 2, \ldots ,n\}$, by $\{e_j\}_{j\in [n]}$ the standard basis of $\Z^n$, and by  $\preceq$ the coordinate-wise partial order on $\Z^n$: if $u=(u_i),v=(v_i)\in \Z^n$, we write $u\preceq v$ if and only if $u_i \le v_i$, for all $1\le i\le n$.

An $n$-\emph{parameter persistence module} $V$ consists in a collection $\{ V^u \}_{u\in \Z^n}$ of $\F$-vector spaces and a collection $\{ \varphi^{u,v} : V^u \to V^v \}_{u\preceq v \in \Z^n}$ of linear maps such that $\varphi^{u,w} = \varphi^{v,w} \circ \varphi^{u,v}$ whenever $u\preceq v\preceq w$, and $\varphi^{u,u} = \id_{V^u}$, for all $u$. 

In applications, persistence modules usually originate from filtrations of cell complexes. An $n$-\emph{filtration} of a complex $X$ is a family $\mathcal{X}= \{ X^u \}_{u \in \Z^n}$ of subcomplexes of $X$ such that $X^u \subseteq X^v$ whenever $u\preceq v$. If $n>1$ we refer to $\mathcal{X} = \{ X^u \}_{u \in \Z^n}$ generically as a \emph{multi-filtration}, as opposed to the case $n=1$ that is called simply a (single-parameter) filtration. The index $u\in\Z^n$ is called a \emph{filtration grade}. If $\sigma \in X^u - \bigcup_{j=1}^n X^{u-e_j}$, then $u$ is called an {\em entrance grade} of $\sigma$ in $\mathcal{X}$. The \emph{dimension} of $\mathcal{X}$ is, by definition, the dimension of $X$.

Throughout this article, we make the important assumption that all filtrations $\mathcal{X}= \{ X^u \}_{u \in \Z^n}$ we consider arise from the sublevel sets of an order-preserving function $f:(X,\le) \to (\Z^n ,\preceq)$, where the partial order on $X$ is defined in Section \ref{subsec:chaincm}. This means that $X^u = \{ \sigma \in X \mid f(\sigma) \preceq u \}$, for every $u\in \Z^n$. 
In topological data analysis, such a filtration of a cell complex $X$ is usually called \emph{one-critical} \cite{carlsson2009computing} because every cell $\sigma\in X$ admits exactly one entrance grade. 
In what follows, this assumption on filtrations will be crucial, as it ensures that, for each subset $\sigma\subseteq [n]$, setting $e_\sigma=\sum_{j\in\sigma}e_j$, we have 
\begin{equation}
\label{eq:one-crit-consequence}
\bigcap_{j\in \sigma} X^{u-e_j} = X^{u-e_\sigma} .
\end{equation}
Additionally, we assume all filtrations to be \emph{bounded} by requiring that $X^u\ne\emptyset$ implies $0\preceq u$, and that $X^u =X$ whenever $u$ is sufficiently large.

Applying the $q$th homology functor to an  $n$-filtration $\mathcal{X} = \{ X^u \}_{u \in \Z^n}$ yields the $n$-\emph{parameter persistent homology module} ${V}_q=\{ {V}_q^u, \iota^{u,v}_{q}\}_{u\preceq v\in \Z^n}$, with ${V}_q^u =H_q (X^u)$ and 
$\iota^{u,v}_{q} \colon H_q(X^u) \to H_q (X^v)$ induced by the inclusion maps $X^u \hookrightarrow X^v$ for $u\preceq v$. 
We denote this persistence module by $V_q = H_q (\mathcal{X})$.
Inspired by the one-parameter situation where a critical filtration grade is characterized by the property that the relative homology of the pair $(X^u, X^{u-1})$ is  non-trivial  (cf., e.g., \cite{FugacciLV20}), a grade $u\in\Z^n$ of a multi-filtration $\mathcal{X} = \{ X^u \}_{u \in \Z^n}$ will be said to be a {\em critical filtration grade of index $q$} if $H_q(X^u, \cup_j X^{u-e_j})$ is non-trivial.
Clearly, critical filtration grades are a subset of entrance grades. We call the number
\begin{equation}\label{def:relative-c_q}
\mu_q(u)\=\dim H_q(X^u, \cup_{j=1}^n X^{u-e_j})
\end{equation}
the \emph{homological Morse number} of degree $q$ at $u\in \Z^n$. Informally, we can think of $\mu_q(u)$ as the number of critical cells whose entrance at $u\in \Z^n$ has an effect on $q$th homology of the filtration. The connection with the number of critical cells with entrance grade $u$, as defined in discrete Morse theory \cite{Forman1998,kozlov2005discrete} (with the notion of \emph{critical} depending on the choice of a discrete gradient vector field, also called an acyclic matching), can be made rigorous. Let $m_q(u)$  denote the $q$th \emph{Morse number} at $u$ of the filtration $\mathcal{X}$, defined as the number of critical $q$-cells with entrance grade $u$; then $m_q(u)\ge \mu_q(u)$ \cite[Prop.~1]{landi2021discrete}, for every choice of an acyclic matching used to define $m_q(u)$.

\subsection{The Mayer-Vietoris spectral sequence}
\label{subsec:MV_preliminaries}

In this subsection we provide a brief description of the Mayer-Vietoris spectral sequence. We follow \cite[Ch.~VII]{Brown1982} and adapt the construction to the case of cell complexes. 
The Mayer-Vietoris spectral sequence is a particular case of a spectral sequence associated with a double complex, a standard construction that can be found in most books on homological algebra (see for example \cite{weibel1994introduction,rotman2009homological,maclane2012homology}). More details on the Mayer-Vietoris spectral sequence in our situation of interest are given in Section \ref{sec:setupMV}. 

Let $X$ be a cell complex (as defined in Section \ref{subsec:chaincm}) and let $\{ X^j \}_{j\in J}$ be a collection of subcomplexes of $X$, with $J$ a totally ordered index set.
The Mayer-Vietoris spectral sequence relates the homology of the union of the collection $\cup_{j\in J}X^j$ with the homology of the subcomplexes $X^j$ and their intersections $\cap_{j\in \sigma}X^j$ for $\sigma \subseteq J$.
The \emph{nerve} $\Sigma$ of the collection $\{ X^j \}_{j\in J}$ is defined as the abstract simplicial complex of all $\sigma \subseteq J$ such that $\cap_{j\in \sigma} X^j \ne \emptyset$. 
For all $p\ge 0$ we denote $\Sigma_p$ the set of $p$-simplices of $\Sigma$, which are of the form $\sigma =\{ j_1 < \cdots < j_{p+1} \}$.
For all $p$, consider the chain complexes
\[ C_{p,*} =  \bigoplus_{\sigma \in \Sigma_p} C_* (\cap_{j \in \sigma} X^j) \]
with differential maps 
\begin{equation}
\label{eq:horiz_d}
\delta_{p,*} :C_{p,*} \to C_{p-1,*} 
\end{equation} 
between them defined as follows: for $1\le \ell \le p+1$ and  $\sigma =\{ j_1 < \cdots < j_{p+1} \}$ consider  $\partial_\ell \sigma \=\{ j_1 , \ldots , \hat{\jmath}_\ell , \ldots , j_{p+1} \}$, obtained by removing $j_\ell$; then, for $p\ge 1$, observe that the inclusions $C_*(\cap_{j\in \sigma}X^{j})\hookrightarrow C_*(\cap_{j\in \partial_\ell \sigma}X^{j})$ induce chain maps $\delta^{(\ell)}_{p,*} :C_{p,*} \to C_{p-1,*}$ and define $\delta_{p,*} \= \sum_{i=0}^p (-1)^{i} \delta_{p,*}^{(p+1-i)}$.
A chain map $\varepsilon_* = \delta_{0,*}:C_{0,*} \to C_*(\cup_{j\in J} X^j)$ is induced in a similar way by the inclusions $C_*(X^{j})\hookrightarrow C_*(\cup_{j\in J} X^{j})$.
To facilitate our manipulations in the following sections, in the definition of $\delta_{p,*}$ we made a different choice from \cite{Brown1982} regarding the alternating signs, which leads however to an isomorphic construction of the double complex.

The following sequence of chain complexes is exact (see \cite{Brown1982}):
\begin{equation}
\label{eq:augm}
0 \xleftarrow{} C_*(\cup_{j\in J} X^j) \xleftarrow{\varepsilon_*} C_{0,*} \xleftarrow{\delta_{1,*}}  C_{1,*} \xleftarrow{} \cdots \xleftarrow{} C_{p-1,*} \xleftarrow{\delta_{p,*}} C_{p,*} \xleftarrow{} \cdots
\end{equation}
We note that if the index set $J$ is finite and $m = |J|$, then $C_{p,*}=0$ for all $p\ge m$. 
We will henceforth refer to the sequence of chain complexes $C_{0,*} \xleftarrow{\delta_{1,*}} \cdots  \xleftarrow{\delta_{p,*}} C_{p,*} \xleftarrow{} \cdots$ as the \emph{truncation} of the exact sequence (\ref{eq:augm}). As a consequence of the definitions, this is a double complex
\begin{equation*}
\label{eq:MVdoublecmp}
C_{p,q} =  \bigoplus_{\sigma \in \Sigma_p} C_q (\cap_{j \in \sigma} X^j)
\end{equation*} 
with the horizontal differential $\delta_{p,q}: C_{p,q} \to C_{p-1,q}$ we just introduced, and the vertical differential $\partial_{p,q}: C_{p,q} \to C_{p,q-1}$ induced by the differential of $C_*(X)$, with a sign change of $(-1)^p$ to ensure that squares are anticommutative (that is, $\partial_{p-1,q}\delta_{p,q}+\delta_{p,q-1}\partial_{p,q}=0$), which is the convention we choose for double complexes in this article.

Let $T_*$ denote the total complex of the double complex $\{ C_{p,q}, \delta_{p,q}, \partial_{p,q} \}_ {p,q \in \Z}$, which is the chain complex with chain groups $T_k$ and differentials $d^T_k$ are defined by 
\[ T_k \= \bigoplus_{p+q=k} C_{p,q}, \qquad
 d^T_k \= \sum_{p+q=k} (\delta_{p,q} +\partial_{p,q}) .
\]
The \emph{Mayer-Vietoris spectral sequence} of the collection $\{ X^j \}_ {j\in J}$ is defined as the spectral sequence associated with the \emph{first filtration} $F^{\text{I}}_p$ of $T_*$ (see e.g. \cite[Ch.~10]{rotman2009homological} for details), given by $F^\text{I}_p T_k \= \bigoplus_{i\le p} C_{i,k-i}$, with differentials induced by $d^T$. 
The Mayer-Vietoris spectral sequence converges to the homology $H_* (\cup_{j\in J} X^j)$ of the union $\cup_{j\in J} X^j$, since it can be shown \cite{Brown1982} that $H_*(T_*) \cong H_* (\cup_{j\in J} X^j)$.

\section{The Koszul complex of persistence and its Betti tables}
\label{sec:koszul_cmp}

The direct analysis of an $n$-parameter persistence module when $n>1$ is in general quite complicated due to the lack of a finite or at least tame family of indecomposable summands for such objects, as proved by Gabriel in  \cite{gabriel}.  Hence, one often resorts to simpler albeit incomplete algebraic invariants of a persistence module. In this paper we focus on the Betti tables (also called multi-graded Betti numbers) of a persistence module, calculated via the homology of its Koszul complex.  

Betti tables have been studied since early works on multi-parameter persistence \cite{carlsson2009theory,knudson2008refinement} where it was noted that there is an equivalence  between the category of $n$-parameter persistence modules and the category of $n$-graded modules over the polynomial ring $S:=\F [x_1, \ldots ,x_n]$. 
Explicitly, the correspondence takes a persistence module $\{ V^u ,\varphi^{u,v} \}$ to the $n$-graded $S$-module $\bigoplus_{u\in \Z^n} V^u$ with the action of $S$ defined by $x_i \cdot z = \varphi^{u,u+e_i} (z)$, for all $z\in V^u$ and all $i\in [n]$. This correspondence allows for the use of tools from commutative algebra to study persistence modules. We refer the reader to \cite{miller2005combinatorial} for background on such invariants for $n$-graded modules, while here we adopt the point of view of persistence modules.

Given an $n$-parameter persistence module $V$,  and regarding it as an $n$-graded $S$-module $V = \bigoplus_{u\in \Z^n}V^u$ via the equivalence of categories  mentioned above,  the $i$th \emph{Betti table}  (or \emph{multi-graded Betti numbers})  of $V$ is defined as $\xi_i\colon\Z^n\to \N$ with
\[ \xi_i (u) = \dim_{\F} (\Tor_i^{S}(V,\F )(u)),
\]
for all $u\in \Z^n$ and all  $i\in \{ 0,1, \ldots ,n\}$, where $\Tor_i^{S}(V,\F )(u)$ is the part of grade $u$ of  $\Tor_i^{S}(V,\F )$ viewed as an $n$-graded $S$-module.
By definition of $\Tor_i^{S}(V,\F )$, the $i$th Betti table $\xi_i (u)$ of $V$ at $u$ can thus be calculated by applying the functor $-\otimes_S \F$ to a free resolution of $V$, taking $i$th homology of the resulting chain complex and considering the dimension over $\F$ of the part of grade $u$ of the homology module.

The general property $\Tor_i^{S}(V,\F ) \cong \Tor_i^{S}(\F, V)$ from homological algebra (see, e.g., \cite[Theorem 7.1]{rotman2009homological}) provides an alternative way to calculate the $i$th Betti table of $V$ by applying the functor $- \otimes_S V$ to a free resolution of $\F$ and taking $i$th homology. This yields an equivalent definition of the Betti tables of $V$ based on its Koszul complex.
Given an $n$-graded $S$-module $V=\bigoplus_{u\in \Z^n}V^u$, the \emph{Koszul complex}  \emph{of $V$ at grade $u\in\Z^n$}, denoted $\mathbb{K}_*(x_1,\ldots ,x_n; V)(u)$, is the part of grade $u$ of the ($n$-graded) chain complex $\mathbb{K}_* \otimes_S V$, where  $\mathbb{K}_*= \mathbb{K}_*(x_1,\ldots ,x_n)$ is the classical Koszul complex of $S$, defined for example in \cite[Def. 1.26]{miller2005combinatorial} or \cite[Ch. 17.2]{eisenbud1995commutative}. 
Below, we provide an explicit definition of $\mathbb{K}_*(x_1,\ldots ,x_n; V)(u)$. Since $\mathbb{K}_*$ is a (minimal) free resolution of $\F \cong S/\langle x_1, \ldots ,x_n\rangle$ \cite[Prop. 1.28]{miller2005combinatorial}, as observed above the $i$th homology module of the chain complex $\mathbb{K}_* \otimes_S V$ has dimensions (over $\F$) in the various grades $u\in \Z^n$ coinciding with the Betti table $\xi_i$ of $V$. In other words, for each $u\in \Z^n$,
\[
\xi_i (u) = \dim_{\F} H_i (\mathbb{K}_*(x_1,\ldots ,x_n; V)(u)) . 
\]

For our purposes, we focus on the Betti tables $\xi_i^q$ of the persistent homology module $V_q$ arising from the $q$th homology of a filtration $\{ X^u \}_{u \in \Z^n}$. For each $i\in \{0,1,\ldots , n \}$, the  module appearing in degree $i$ in the chain complex  $\mathbb{K}_*(x_1,\ldots ,x_n; V_q)(u)$ is
\begin{equation}
\label{eq:koszul-space}
\mathbb{K}_i (x_1,\ldots ,x_n; V_q)(u) = \bigoplus_{|\sigma|=i} H_q (X^{u-e_{\sigma}})
\end{equation}
with $e_{\sigma}\= \sum_{j\in \sigma} e_j$ and $\sigma \subseteq [n]$. The modules $\mathbb{K}_i (x_1,\ldots ,x_n; V_q)(u)$ are zero for all $i\notin \{0,1,\ldots n \}$.
The differentials  of $\mathbb{K}_* (x_1,\ldots ,x_n; V_q)(u)$ are defined in terms of the maps $\iota^{v,w}_{q}: H_q (X^v) \to H_q (X^w)$ that define $V_q$ as follows: the restriction of
\begin{equation}
\label{eq:koszul-differential}
d_i : \mathbb{K}_i (x_1,\ldots ,x_n; V_q)(u) \to \mathbb{K}_{i-1} (x_1,\ldots ,x_n; V_q)(u)
\end{equation}
to each direct summand $H_q(X^{u-e_{\sigma}})$ of its domain, with  $\sigma = \{ j_1 < j_2 <\ldots < j_i \}$, is
\begin{equation*}
\label{eq:expl_def_dKosz}
d_{i|} = \sum_{r=0}^{i-1} (-1)^r \iota_q^{u-e_{\sigma},u-e_{\partial_{i-r} \sigma}} ,
\end{equation*}
where $\partial_{i-r} \sigma \=  \{ j_1, \ldots ,\hat{\jmath}_{i-r}, \ldots ,j_i \}$.
As we said, $\xi^q_i(u)$ can be defined as the dimension (over $\F$) of the $i$th homology module of $\mathbb{K}_*(x_1,\ldots ,x_n; V_q)(u)$.

Let us examine the map $d_1$ in the Koszul complex. This map, sometimes called the \emph{merge map} and denoted  $\mer_q^u$, is the composition of the available maps $\overline{\varepsilon}^u_q$ followed by $i^u_q$, making the diagram
\begin{equation}
\label{eq:def_mer}
\begin{tikzcd}
 \bigoplus_{j} H_q (X^{u-e_j})  \arrow[dr, "\mer_q^u = d_1", swap] \arrow[r, "\overline{\varepsilon}^u_q"] & H_q (\cup_j X^{u-e_j}) \arrow[d, "i^u_q"]  \\
& H_q (X^u) 
\end{tikzcd}
\end{equation}
commute. Here, $\overline{\varepsilon}^u_q$ is the map induced by the obvious inclusions, whose restriction to each direct summand $H_q (X^{u-e_j})$ of the domain is the map $H_q (X^{u-e_j}) \to H_q (\cup_j X^{u-e_j})$ induced in homology by $X^{u-e_j}\hookrightarrow \cup_j X^{u-e_j}$. The map $i^u_q$ is induced in homology by the inclusion $\cup_j X^{u-e_j} \hookrightarrow X^u$.

\section{The Mayer-Vietoris spectral sequence of a multi-filtration}
\label{sec:setupMV}

Let $\{ X^u\}_{u\in \Z^n}$ be an $n$-parameter filtration of a cell complex $X$.
For a fixed grade $u \in \Z^n$ of this  filtration, consider the collection of cell subcomplexes $\{ X^{u-e_j} \}_{j\in[n]}$.  
If $n=2$, it is well-known that there is a short exact sequence
\begin{equation*}
\label{eq:shortMV}
0 \xrightarrow{} C_* (X^{u-e_1} \cap X^{u-e_2}) \xrightarrow{} C_* (X^{u-e_1}) \oplus C_* (X^{u-e_2})  \xrightarrow{} C_* (X^{u-e_1} \cup X^{u-e_2}) \xrightarrow{} 0  
\end{equation*}
inducing in homology the Mayer-Vietoris long exact sequence, which clarifies the relation between the homology of $X^{u-e_1} \cap X^{u-e_2}$, $X^{u-e_1}$, $X^{u-e_2}$ and $X^{u-e_1} \cup X^{u-e_2}$. This can be generalized for $n>2$ via the Mayer-Vietoris spectral sequence relating the homology of $\cap_{j\in \sigma} X^{u-e_j}$, for all $\sigma \subseteq [n]$, to the homology of $\cup_{j\in [n]} X^{u-e_j}$.
Even if the Mayer-Vietoris spectral sequence can be defined for general collections of subcomplexes as seen in Section \ref{subsec:MV_preliminaries}, in this article we will focus on the collection $\{ X^{u-e_j} \}_{j\in[n]}$ for a fixed grade $u\in \Z^n$. Here, we provide more details on the Mayer-Vietoris spectral sequence associated with the collection of subcomplexes $\{ X^{u-e_j} \}_{j\in[n]}$, in preparation to describe the connection with the Koszul complex.

Given an $n$-parameter filtration $\{ X^u\}_{u\in \Z^n}$, fix a grade $u\in \Z^n$ and consider the collection of subcomplexes $\{ X^{u-e_j} \}_{j\in[n]}$. Considering the intersections of all possible subcollections of $\{ X^{u-e_j} \}$, we can define a double complex $C_{*,*} = \{ C_{p,q} , \delta_{p,q} , \partial_{p,q} \}_{p,q\in \Z}$ with
\begin{equation}
\label{eq:Cdouble} 
C_{p,q} \= 
\left\{\begin{array}{lr}
\bigoplus_{|\sigma| = p+1} C_q (\cap_{j\in \sigma} X^{u-e_j}) & \text{if } p,q \ge 0\\
0  & \text{otherwise,}
\end{array}\right. 
\end{equation}
for $\sigma \subseteq [n]$, and with the two differentials $\delta_{p,q} : C_{p,q} \to C_{p-1,q}$ and $\partial_{p,q} : C_{p,q} \to C_{p,q-1}$ defined as in Section \ref{subsec:MV_preliminaries}. The Mayer-Vietoris spectral sequence is the (first quadrant) spectral sequence associated with the filtration $\{ F^\text{I}_p T_* = \bigoplus_{i\le p} C_{i,*-i} \}_{p\in \Z}$ of the total complex $T_*$ of $C_{*,*}$, and it converges to $H_* (T_*)\cong H_* (\cup_{j\in [n]} X^{u-e_j})$. 
Let us recall (see e.g. \cite{weibel1994introduction,rotman2009homological}) that convergence of the spectral sequence to $H_*(T_*)$ is  expressed by isomorphisms
$E^{\infty}_{p,q} \cong \mathcal{F}_p H_{k}(T_*) / \mathcal{F}_{p-1} H_{k}(T_*)$, for all $p$, $q$ and $k=p+q$, where $\{ \mathcal{F}_p H_* (T_*) \}_{p\in \Z}$ is the induced filtration on $H_*(T_*)$ defined by
\begin{equation} 
\label{eq:inducedFpHT}
\mathcal{F}_p H_k (T_*) \= \im (f^p_k : H_k (F^{\text{I}}_p T_*) \to H_k (T_*) ) , 
\end{equation} 
with $f^p_k$ being the map induced by the inclusion $F^{\text{I}}_p T_* \xhookrightarrow{} T_*$. 

A key observation for this work is that, as we showed in (\ref{eq:one-crit-consequence}), the one-criticality assumption on the $n$-parameter filtration ensures that $\cap_{j\in \sigma} X^{u-e_j} = X^{u-e_{\sigma}}$, with $e_{\sigma}\= \sum_{j\in \sigma} e_j$, for each $\sigma \subseteq [ n ]$. Keeping this in mind, we want to explicitly describe the low-degree pages of the spectral sequence, as is possible for spectral sequences associated with double complexes (see e.g. \cite{weibel1994introduction,rotman2009homological}).

The $0$-page of the Mayer-Vietoris spectral sequence associated with the collection $\{ X^{u-e_j} \}_{j\in[n]}$ has terms $E^0_{p,q} = C_{p,q} = \bigoplus_{|\sigma| = p+1} C_q ( X^{u-e_{\sigma}})$ and differentials $d^0_{p,q} = \partial_{p,q}: C_{p,q} \to C_{p,q-1}$ induced by the differentials of $C_*(X)$, up to a $(-1)^p$ sign change (see Section \ref{subsec:MV_preliminaries}).
The terms of the $1$-page are therefore
\[
E^1_{p,q}=H_q (C_{p,*}) = H_q \Big( \bigoplus_{|\sigma| = p+1} C_* (X^{u-e_\sigma}) \Big) \cong \bigoplus_{|\sigma| = p+1}  H_q (X^{u-e_\sigma}) .
\]
Let us explicitly write the 1-page of the Mayer-Vietoris spectral sequence in our setting:
\begin{equation*} 
\begin{tikzcd}
\vdots  & \vdots   &   &  \vdots  \\
E^1_{0,q}= \bigoplus_{j} H_q (X^{u-e_j})  & \arrow[l,"\overline{\delta}_{1,q}",swap] E^1_{1,q}= \bigoplus_{j<h} H_q(X^{u-e_j-e_h})   & \arrow[l,"\overline{\delta}_{2,q}",swap]  \cdots & \arrow[l,"\overline{\delta}_{n-1,q}",swap] E^1_{n-1,q}= H_q (X^{u- e_{[n]}})  \\
\vdots  & \vdots   &   &  \vdots  \\
E^1_{0,1}= \bigoplus_{j} H_1 (X^{u-e_j})  & \arrow[l,"\overline{\delta}_{1,1}",swap] E^1_{1,1}=\bigoplus_{j<h} H_1 (X^{u-e_j-e_h})   & \arrow[l,"\overline{\delta}_{2,1}",swap]  \cdots & \arrow[l,"\overline{\delta}_{n-1,1}",swap] E^1_{n-1,1}= H_1 (X^{u- e_{[n]}})  \\
E^1_{0,0}= \bigoplus_{j} H_0 (X^{u-e_j})  & \arrow[l,"\overline{\delta}_{1,0}",swap] E^1_{1,0}=\bigoplus_{j<h} H_0 (X^{u-e_j-e_h})   & \arrow[l,"\overline{\delta}_{2,0}",swap]  \cdots & \arrow[l,"\overline{\delta}_{n-1,0}",swap] E^1_{n-1,0}= H_0 (X^{u- e_{[n]}})  \\
\end{tikzcd}
\end{equation*}
We display only the first quadrant $p,q\ge 0$, since elsewhere the terms $E^1_{p,q}$ are null. Moreover,
the columns of indices $p=0,1,\ldots,n-1$ we showed in the diagram are the only  (possibly) non-null ones. The differentials $d^1_{p,q}: E^1_{p,q}\to E^1_{p-1,q}$ are the maps induced in homology by the horizontal differentials $\delta_{p,q}$ of the double complex, which we denote $\overline{\delta}_{p,q}$. 
Explicitly, the differential 
\begin{equation}
\label{eq:d1_restr}
d^1_{p,q} = \overline{\delta}_{p,q} : E^1_{p,q} \cong \bigoplus_{|\sigma| = p+1}  H_q ( X^{u-e_\sigma} ) \longrightarrow E^1_{p-1,q} \cong \bigoplus_{|\tau| = p}  H_q ( X^{u-e_\tau} )
\end{equation}
is the linear map acting on each direct summand $H_q ( X^{u-e_{\sigma}} )$ of the domain by
\begin{equation*}
d^1_{p,q |} = \sum_{i=0}^p (-1)^i \iota_q^{u-e_\sigma, u-e_{\partial_{p+1-i} \sigma}} ,
\end{equation*}
where $\iota_q^{u-e_\sigma, u-e_{\partial_{\ell} \sigma}}$ denotes the map induced in $q$th homology by the inclusion $C_*(X^{u-e_\sigma})\hookrightarrow C_*(X^{u-e_{\partial_{\ell} \sigma}})$, for each for $1\le \ell \le p+1$. Let us recall that, if $\sigma =\{ j_1 < \cdots < j_{p+1} \}$, we denote  $\partial_{\ell} \sigma \=\{ j_1 < \cdots < \hat{\jmath}_{\ell} < \cdots < j_{p+1} \}$.
We observe that each row in the $1$-page $\{ E^1_{p,q}, d^1_{p,q}\}_{p,q\in \Z}$ is the truncation of a Koszul complex, for each degree $q$ of homology. More precisely, the $q$th row $\{ E^1_{p,q}, d^1_{p,q}\}_{p\in\Z}$ is a truncated version of the Koszul complex $\K_*(x_1,\ldots,x_n;V_q)(u)$, with the chain group $\K_0(x_1,\ldots,x_n;V_q)(u) = H_q(X^u)$ replaced by the zero vector space. We will prove the details of this claim in Proposition \ref{prop:MV2page}.

We obtain the $2$-page of the Mayer-Vietoris spectral sequence by taking homology of the horizontal chain complexes in the $1$-page. For our purposes, we are not as interested in its terms as we are in their dimensions (as vector spaces), which we express as follows in terms of the Betti tables $\xi_i^q(u)$, dropping in the notation the dependence on $u$ for readability's sake:  
\begin{equation*} 
\begin{tikzcd}
\vdots  & \vdots   &   &  \vdots  \\
\dim E^2_{0,q}=\dim \im \mer_q +\xi^q_1  &   \dim E^2_{1,q}=\xi^q_2   &   \cdots & \dim E^2_{n-1,q}=\xi^q_n  \\
\vdots  & \vdots   &   &  \vdots  \\
\dim E^2_{0,1}=\dim \im \mer_1 +\xi^1_1  &   \dim E^2_{1,1}=\xi^1_2   &   \cdots & \dim E^2_{n-1,1}=\xi^1_n  \\
\dim E^2_{0,0}= \dim \im \mer_0 +\xi^0_1  &   \dim E^2_{1,0}=\xi^0_2   &   \cdots & \dim E^2_{n-1,0}=\xi^0_n  \\
\end{tikzcd}
\end{equation*}
As before, we have only $n$ (possibly) non-null columns, corresponding to $p=0,\ldots ,n-1$. For $1\le p\le n-1$, it is clear why the multi-graded Betti numbers appear in the table, since they are defined as the dimension of the homology groups of the Koszul complex. 
In Proposition \ref{prop:MV2page} we prove the equalities in the column $p=0$, upon rigorously checking the claims we made regarding the $1$-page.

\begin{prop}
\label{prop:MV2page}
For each $q\ge 0$, the
$q$th row $\{ E^1_{p,q}, d^1_{p,q}\}_{p\in\Z}$ of the $1$-page of the Mayer-Vietoris spectral sequence associated with $\{ X^{u-e_j}\}_{j\in [n]}$ coincides with the truncation of the Koszul complex $\K_{*+1}(x_1,\ldots,x_n;V_q)(u)$.
The terms of the $2$-page have dimension
\begin{equation*}
\label{eq:dimE2} 
\dim E^2_{p,q}=
\left\{\begin{array}{lr}
\dim \im \mer^u_q + \xi^q_1(u) & \text{if } p = 0\\
\xi^q_{p+1}(u) & \text{if } 1 \le p \le n-1 \\
0  & \text{otherwise}
\end{array}\right. 
\end{equation*}
\end{prop}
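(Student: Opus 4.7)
The plan is to verify the description of the $E^1$-page row by row, identify it with the appropriately shifted Koszul complex of Section \ref{sec:koszul_cmp}, and then compute the $E^2$-page columnwise, paying particular attention to the column $p=0$, where the Mayer--Vietoris spectral sequence differs from the Koszul complex by a truncation.

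First I would compute $E^1_{p,q}$ explicitly. By construction this is the vertical homology of the double complex at position $(p,q)$. The vertical differential $\partial_{p,q}$ agrees with the ordinary chain boundary on each summand $C_q(\cap_{j\in\sigma}X^{u-e_j})$ up to the global sign $(-1)^p$, so taking vertical homology yields
\[
E^1_{p,q}=\bigoplus_{|\sigma|=p+1} H_q\!\left(\bigcap_{j\in\sigma}X^{u-e_j}\right).
\]
The one-criticality identity (\ref{eq:one-crit-consequence}) gives $\bigcap_{j\in\sigma}X^{u-e_j}=X^{u-e_\sigma}$, which matches (\ref{eq:koszul-space}) with $i=p+1$; hence $E^1_{p,q}=\K_{p+1}(x_1,\ldots,x_n;V_q)(u)$ as $\F$-vector spaces.

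Next I would check that the induced horizontal differentials $d^1_{p,q}=\overline{\delta}_{p,q}$ coincide with the Koszul differentials $d_{p+1}$ from Equation (\ref{eq:koszul-differential}). The horizontal differential $\delta_{p,q}$ on the double complex is, by its combinatorial definition in (\ref{eq:horiz_d}), the alternating sum over the ways of deleting one index $j_{i-r}$ from $\sigma$ of the chain-level inclusions $C_q(X^{u-e_\sigma})\hookrightarrow C_q(X^{u-e_{\sigma\smallsetminus\{j_{i-r}\}}})$. Passing to homology produces exactly the alternating sum of the maps $\iota_q^{u-e_\sigma,\,u-e_\sigma+e_{j_{i-r}}}$ in (\ref{eq:koszul-differential}), with the sign pattern matching because both use the position of the deleted element in the ordered set $\sigma=\{j_1<\cdots<j_i\}$. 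The only discrepancy with the full Koszul complex is that the term $\K_0(x_1,\ldots,x_n;V_q)(u)=H_q(X^u)$ is replaced by $0$, since the double complex has no column at $p=-1$; this is exactly the stated truncation.

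Finally I would read off $\dim E^2_{p,q}$ column by column. For $1\le p\le n-1$, the position $p$ lies strictly inside the Koszul complex, so the truncation is invisible there and
\[
\dim E^2_{p,q}=\dim H_{p+1}\!\bigl(\K_*(x_1,\ldots,x_n;V_q)(u)\bigr)=\xi^q_{p+1}(u),
\]
while for $p\ge n$ or $p<0$ the terms vanish for cardinality reasons. The main point is the column $p=0$: since $d^1_{0,q}=0$ in the truncated complex, the homology at that position is $\K_1/\im\, d_2$, so by rank--nullity
\[
\dim E^2_{0,q}=\dim\K_1-\dim\im\, d_2=\bigl(\dim\ker d_1-\dim\im\, d_2\bigr)+\dim\im\, d_1=\xi^q_1(u)+\dim\im\, d_1.
\]
The commutative diagram (\ref{eq:def_mer}) identifies $d_1$ with $\mer^u_q$, yielding the desired formula. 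The only subtle step is keeping the signs of $\overline{\delta}_{p,q}$ aligned with those of the Koszul differential; once that is settled, the rest is bookkeeping plus one application of rank--nullity.
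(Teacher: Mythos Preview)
Your proposal is correct and follows essentially the same approach as the paper: identify the $E^1$-row with the shifted, truncated Koszul complex via one-criticality and the explicit description of $\overline{\delta}_{p,q}$, read off $\xi^q_{p+1}(u)$ for $p\ge 1$, and handle $p=0$ by the rank--nullity decomposition $\dim\K_1=\dim\ker d_1+\dim\im d_1$ together with $d_1=\mer^u_q$. The paper's computation at $p=0$ is arranged slightly differently (it writes $\dim(\oplus_j H_q(X^{u-e_j}))=\dim\im\mer^u_q+\dim\ker\mer^u_q$ first and then subtracts $\dim\im\overline{\delta}_{1,q}$), but this is the same argument.
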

\begin{proof}
For all $p,q\ge 0$, it is clear that $E^1_{p,q}=\bigoplus_{|\sigma| = p+1} H_q ( X^{u-e_{\sigma}})$ coincides with 
$\mathbb{K}_{p+1} (x_1,\ldots ,x_n; V_q)(u)$ as defined in (\ref{eq:koszul-space}). 
Comparing the explicit description (\ref{eq:d1_restr}) of the differentials $d^1_{p,q}=\overline{\delta}_{p,q}$ with the differentials $d_{p}$ of the Koszul complexes $\mathbb{K}_{*} (x_1,\ldots ,x_n; V_q)(u)$, defined in (\ref{eq:koszul-differential}), we observe that they coincide up to a shift in grading: $\overline{\delta}_{p,q}=d_{p+1}$, for all $p\ge 1$. 

Since $E^2_{p,q} \cong \ker d^1_{p,q}/\im d^1_{p+1,q}$ for all $p,q\ge 0$, it follows that $\dim E^2_{p,q}=\xi^q_{p+1}(u)$ for all $p\ge 1$. We remark that $\dim E^2_{p,q}=\xi^q_{p+1}(u) = 0$ if $p\ge n$. It is also clear that $\dim E^2_{p,q}=0$ if $p<0$. 
If $p=0$, as an effect of the truncation of the Koszul complex (which clearly does not affect the other columns) we have
\begin{align*}  
\dim E^2_{0,q} & = \dim \left( \mathbb{K}_{1} (x_1,\ldots ,x_n; V_q)(u) /\im d_2 \right) \\
& = \dim \left( \oplus_{j} H_q (X^{u-e_j}) /\im \overline{\delta}_{1,q} \right) \\
& = \dim ( \oplus_j H_q (X^{u-e_j})) - \dim \im \overline{\delta}_{1,q}  \\
& = \dim \im \mer^u_q + \dim \ker \mer^u_q - \dim \im \overline{\delta}_{1,q}  \\
& = \dim \im \mer^u_q + \xi_1^q(u) ,
\end{align*}
where the last two equalities follow from the existence of the differential 
\[
d_1 = \mer_q^u : \K_1(x_1,\ldots,x_n;V_q)(u) = \oplus_j H_q (X^{u-e_j}) \longrightarrow \K_0(x_1,\ldots,x_n;V_q)(u) = H_q (X^u)
\] 
in the non-truncated Koszul complex.
\end{proof}

\begin{rmk} Since $\dim \im \mer^u_q = \dim H_q (X^u) - \xi_0^q (u)$, if $p=0$ the statement of Proposition \ref{prop:MV2page} can be equivalently expressed as
$\dim E^2_{0,q} = \dim H_q (X^u) -\xi_0^q(u) +\xi_1^q(u)$.
\end{rmk}

Let us now focus on convergence and on the $\infty$-page of the Mayer-Vietoris spectral sequence. 

\begin{prop}
\label{prop:union}
Let $\{ X^u\}_{u\in \Z^n}$ be an $n$-parameter filtration. The Mayer-Vietoris spectral sequence of $\{ X^{u-e_j}\}_{j\in [n]}$, for a fixed grade $u \in \Z^n$, has $E^n_{p,q}=E^{\infty}_{p,q}$, for all $p,q$, and
\begin{equation}   
\label{eq:convergence_n}
H_k (\cup_j X^{u-e_j}) \cong \bigoplus_{p+q=k} E^n_{p,q} = \bigoplus_{i=0}^k E^{n}_{i,k-i} ,
\end{equation}
for all $k\ge 0$.
\end{prop}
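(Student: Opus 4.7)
The plan is to exploit the fact that the Mayer-Vietoris spectral sequence here lives in the bounded horizontal strip $0\le p\le n-1$ of the first quadrant. As already observed just before the proposition, for $p\ge n$ there is no subset $\sigma\subseteq[n]$ with $|\sigma|=p+1$, so $E^1_{p,q}=0$ for such $p$, and this vanishing is inherited by every subsequent page. The two claims then follow from purely formal facts about spectral sequences, plus the fact that we work over a field, so the proof should be short.

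For the first claim, I would recall that the $r$th differential has bidegree $(-r,r-1)$, that is $d^r_{p,q}\colon E^r_{p,q}\to E^r_{p-r,q+r-1}$. Fix $r\ge n$ and consider an arbitrary term $E^r_{p,q}$. If $p<0$ or $p\ge n$ the term is already zero, so assume $0\le p\le n-1$. The outgoing differential lands in $E^r_{p-r,q+r-1}$ with first index $p-r\le (n-1)-n=-1<0$, hence in a zero column; analogously, the incoming differential $d^r_{p+r,q-r+1}$ has source with first index $p+r\ge r\ge n$, which again lies in a zero column. Therefore every $d^r$ vanishes for $r\ge n$, and hence $E^{r+1}_{p,q}=E^r_{p,q}$ for all $r\ge n$, which gives $E^n_{p,q}=E^\infty_{p,q}$.

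For the second claim, I would invoke the convergence of the Mayer-Vietoris spectral sequence to $H_*(\cup_j X^{u-e_j})$, as recalled in Section~\ref{sec:setupMV} and detailed in Appendix~\ref{sec:MVSS}. Convergence provides, for each $k\ge 0$, a finite filtration $F_\bullet H_k(\cup_j X^{u-e_j})$ whose associated graded pieces are $F_p/F_{p-1}\cong E^\infty_{p,k-p}$. Since we are working with $\F$-vector spaces, every short exact sequence splits, so the filtration splits degree by degree, yielding
\[
H_k(\cup_j X^{u-e_j})\;\cong\;\bigoplus_{p+q=k}E^\infty_{p,q}\;=\;\bigoplus_{i=0}^{k}E^n_{i,k-i},
\]
where the last equality uses the first part of the proposition.

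There is no real obstacle here: the only care needed is in checking the bidegree argument for $r=n$ (rather than $r>n$) to confirm that the $n$th page already stabilises, and in pointing correctly to the appendix for the precise form of convergence being used. Everything else is a direct consequence of the thin-strip shape of the spectral sequence and of working over a field.
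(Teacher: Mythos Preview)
Your proof is correct and follows essentially the same approach as the paper: the paper likewise observes that the columns $p<0$ and $p\ge n$ vanish, so all differentials on the $n$th (and later) pages are zero, and then invokes convergence to $H_*(\cup_j X^{u-e_j})$ together with the field assumption to obtain the direct-sum decomposition. Your version is slightly more explicit in treating all $r\ge n$ at once, but the argument is the same.
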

\begin{proof} Since $E^r_{p,q}=0$ for all $q$ and $r$ whenever $p<0$ or $p\ge n$, for each term $E^n_{p,q}$ both the incoming differential $d^n_{p+n,q-n+1}: E^n_{p+n,q-n+1} \to E^n_{p,q}$ and the outgoing differential $d^n_{p,q}: E^n_{p,q} \to E^n_{p-n,q+n-1}$ are trivial, so $E^n_{p,q}=E^{n+1}_{p,q}=\cdots =E^{\infty}_{p,q}$. 
We saw in Section \ref{subsec:MV_preliminaries} that the spectral sequence converges to $H_*(T_*)\cong H_* (\cup_j X^{u-e_j})$. Recall that $E^{\infty}_{p,q} \cong \mathcal{F}_p H_{k}(T_*) / \mathcal{F}_{p-1} H_{k}(T_*)$, for every $p$, $q$ and $k=p+q$, where $\{ \mathcal{F}_p H_* (T_*) \}_{p\in \Z}$ is the filtration on $H_*(T_*)$ defined by (\ref{eq:inducedFpHT}). Since the spectral sequence is in the first quadrant, 
we have $\bigoplus_{i=0}^k E^n_{i,k-i}= \bigoplus_{i=0}^k E^\infty_{i,k-i}\cong H_k (T_*)$.
\end{proof}

Let us now consider diagram (\ref{eq:def_mer}) and observe that, for the Mayer-Vietoris spectral sequence associated with $\{ X^{u-e_j}\}_{j\in [n]}$, the map $\overline{\varepsilon}^u_q$ is induced in homology by the chain map $\varepsilon_*: \bigoplus_{j} C_* (X^{u-e_j}) \to  C_* (\cup_j X^{u-e_j})$ induced by the inclusions $X^{u-e_j}\hookrightarrow \cup_{j} X^{u-e_j}$.
We end this section by showing in Theorem \ref{thm:Einfty0} that the image of $\overline{\varepsilon}^u_q$ is isomorphic to the term $E^{\infty}_{0,q}$ of the spectral sequence. We first need to state a general result in homological algebra \cite[p.~165--166]{Brown1982}, a proof of which can be found, in a cohomological setting, in \cite[III.7, Lemma 12]{gelfand2003methods}.

\begin{lem}
\label{lem:Manin}
Let $C_{*,*}=\{ C_{p,q}, \delta_{p,q}, \partial_{p,q} \}$ be a first quadrant double complex with associated total complex $T_*$, let $K_*$ be a chain complex and let $\varepsilon_* : C_{0,*} \to K_*$ be a chain map. Assume that
\[
0 \xleftarrow{} K_* \xleftarrow{\varepsilon_*} C_{0,*} \xleftarrow{\delta_{1,*}}  C_{1,*} \xleftarrow{} \cdots \xleftarrow{} C_{p-1,*} \xleftarrow{\delta_{p,*}} C_{p,*} \xleftarrow{} \cdots
\]
is an exact sequence of chain complexes. Consider the induced chain map $\tilde{\varepsilon}_* : T_* \to K_*$ defined by the maps $\tilde{\varepsilon}_k : T_k = C_{0,k}\oplus \cdots \oplus C_{k,0} \to K_k$ sending $(c_0,\ldots ,c_k)$ to $\varepsilon_k (c_0)$. 
Then $\tilde{\varepsilon}_*$ induces isomorphisms 
\[ H_k (\tilde{\varepsilon}_*) : H_k (T_*) \to H_k (K_*)
\]
in homology, for each $k\in \Z$.
\end{lem}

\begin{thm}
\label{thm:Einfty0}
The terms of the $\infty$-page of the Mayer-Vietoris  spectral sequence of $\{ X^{u-e_j}\}_{j\in [n]}$ having index $p=0$ satisfy
\begin{equation}
\label{eq:Einfty0}
E^{\infty}_{0,q} \cong \im \Big( \overline{\varepsilon}^u_q :
\bigoplus_{j} H_q (X^{u-e_j}) \to  H_q (\cup_j X^{u-e_j})
\Big) .
\end{equation}
\end{thm}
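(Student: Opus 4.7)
The plan is to identify $E^\infty_{0,q}$ directly using the filtration that governs convergence of the Mayer--Vietoris spectral sequence, and then to show that this filtration at level $p=0$ is precisely the image of $\overline{\varepsilon}^u_q$.

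First, I would recall from Appendix \ref{sec:MVSS} the standard package underlying the spectral sequence. The augmentation $\varepsilon_* : \bigoplus_j C_*(X^{u-e_j}) \to C_*(\cup_j X^{u-e_j})$ extends to a chain map $\varepsilon : \mathrm{Tot}(C_{*,*}) \to C_*(\cup_j X^{u-e_j})$ which is a quasi-isomorphism (this is what underlies the convergence statement in Theorem \ref{thm:MVspseq}). The filtration of $\mathrm{Tot}(C_{*,*})$ by columns $F_p = \bigoplus_{i\le p} C_{i,*-i}$ therefore induces, via $\varepsilon$, a filtration $F_p H_q(\cup_j X^{u-e_j})$ of the target, and the spectral sequence converges to it in the standard sense $E^\infty_{p,q} \cong F_p H_{p+q}/F_{p-1} H_{p+q}$.

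Next, I would specialise to $p=0$. Because $F_{-1}=0$, we immediately get $E^\infty_{0,q} \cong F_0 H_q(\cup_j X^{u-e_j})$, so the theorem reduces to identifying $F_0 H_q$ with $\im \overline{\varepsilon}^u_q$. By construction $F_0 H_q$ is the subspace of classes represented by total cycles lying in the first column $C_{0,*} = \bigoplus_j C_*(X^{u-e_j})$. A chain in the first column is a total cycle iff it is a cycle for the vertical differential $\partial_{0,*}$ (no horizontal contribution is possible since there is no column $-1$), i.e.\ iff it is a cycle in the ordinary sense in $\bigoplus_j C_q(X^{u-e_j})$. Its image in $H_q(\cup_j X^{u-e_j})$ under the quasi-isomorphism $\varepsilon$ coincides with its image under the restriction $\varepsilon_* : \bigoplus_j C_*(X^{u-e_j}) \to C_*(\cup_j X^{u-e_j})$, which on homology is exactly $\overline{\varepsilon}^u_q$ from diagram (\ref{eq:def_mer}). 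Hence $F_0 H_q = \im \overline{\varepsilon}^u_q$, yielding (\ref{eq:Einfty0}).

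The only delicate point is making sure that the filtration induced on $H_*(\cup_j X^{u-e_j})$ through $\varepsilon$ really does agree with the filtration coming from the spectral sequence, and in particular that the level $F_0$ can be described so concretely. This is a standard fact about first quadrant spectral sequences of double complexes, which I would simply cite from Appendix \ref{sec:MVSS} rather than re-prove; the main substantive observation, once that black box is in place, is the elementary one that the zeroth column has no horizontal boundary to worry about, so a total cycle in $F_0$ is just a vertical cycle in $\bigoplus_j C_q(X^{u-e_j})$.
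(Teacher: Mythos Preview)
Your proposal is correct and follows essentially the same approach as the paper. Both arguments use that $E^{\infty}_{0,q}\cong \mathcal{F}_0 H_q$ since $\mathcal{F}_{-1}=0$, identify $F^{\mathrm{I}}_0 T_* = C_{0,*} = \bigoplus_j C_*(X^{u-e_j})$, and then transport along the quasi-isomorphism $\tilde{\varepsilon}_*: T_* \to C_*(\cup_j X^{u-e_j})$ to conclude that $\mathcal{F}_0 H_q$ corresponds to $\im \overline{\varepsilon}^u_q$; the paper phrases this last step via the commutative triangle $\overline{\varepsilon}^u_q = H_q(\tilde{\varepsilon}_*)\circ f^0_q$ with $H_q(\tilde{\varepsilon}_*)$ an isomorphism, whereas you unpack it by noting that a total cycle in column $0$ is just a vertical cycle, but this is the same content.
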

\begin{proof}
The Mayer-Vietoris spectral sequence is the spectral sequence associated with the filtration $\{ F^{\text{I}}_p T_* = \bigoplus_{i\le p} C_{i,*-i} \}_{p\in \Z}$ of the total complex $T_*$ of the double complex $C_{*,*}$ introduced in (\ref{eq:Cdouble}). Consider the induced filtration $\{ \mathcal{F}_p H_* (T_*) \}_{p\in \Z}$ on $H_*(T_*)$ defined as in (\ref{eq:inducedFpHT}). Since $C_{*,*}$ is a first quadrant double complex, $\mathcal{F}_p H_k (T_*)=0$ whenever $p<0$. In particular, for $p=0$ we have 
\begin{equation}
\label{eq:Einfty_F0}
E^{\infty}_{0,q} \cong \frac{\mathcal{F}_0 H_{q}(T_*)}{\mathcal{F}_{-1} H_{q}(T_*)} = \mathcal{F}_0 H_{q}(T_*) = \im (f^0_q : H_q (F^{\text{I}}_0 T_*) \to H_q (T_*) ) .    
\end{equation}
We complete the proof by showing that $\im f^0_q \cong \im \overline{\varepsilon}^u_q$. Since $F^{\text{I}}_0 T_* = C_{0,*}$ and the chain map $\varepsilon_* : C_{0,*} \to C_* (\cup_j X^{u-e_j})$ fits into the exact sequence (\ref{eq:augm}), we can apply Lemma \ref{lem:Manin} and conclude that the induced map $\tilde{\varepsilon}_* : T_* \to C_* (\cup_j X^{u-e_j})$, which makes the triangle 
\begin{equation*}
\label{eq:comm_epsilon_bis}
\begin{tikzcd}
 C_{0,*} \arrow[dr, "\varepsilon_*", swap] \arrow[hookrightarrow]{r} & T_* \arrow[d, "\tilde{\varepsilon}_*"]  \\
& C_* (\cup_j X^{u-e_j})
\end{tikzcd}
\end{equation*}
commutative, induces isomorphisms in homology. By applying $q$th homology and observing that $H_q (C_{0,*}) = \bigoplus_j H_q (X^{u-e_j})$ we obtain the commutative triangle 
\begin{equation*}
\label{eq:comm_epsilon_homology}
\begin{tikzcd}
 \bigoplus_j H_q (X^{u-e_j}) \arrow[dr, "\overline{\varepsilon}_q^u", swap] \arrow[r, "f^0_q"] & H_q (T_*) \arrow[d, "\cong"]  \\
& H_q (\cup_j X^{u-e_j})
\end{tikzcd}
\end{equation*}
which combined with (\ref{eq:Einfty_F0}) completes the proof.
\end{proof}

\section{Morse inequalities for persistence modules}
\label{sec:strong}

Inspired by the standard Morse inequalities reviewed in Section \ref{sec:Morse-inequalities}, our goal in this section is to prove analogous inequalities for persistence modules obtained from an underlying (multi)-filtered cell complex ${\mathcal X}=\{X^u\}_{u\in \Z^n}$, $n\ge 1$: 
\begin{thm}
\label{thm:strong_relative_xi}
For each $q\ge 0$, and each fixed grade $u\in \Z^n$, we have
\[
\sum_{i=0}^q (-1)^{q+i} \mu_i (u)  \ge \sum_{i=0}^q (-1)^{q+i} \left( \xi_0^i(u) - \sum_{p=1}^{i+1}\xi_p^{i+1-p}(u)    \right) ,
\]
with $\mu_q(u)=\dim H_q(X^u, \cup_j X^{u-e_j})$, $\xi_p^q(u)=\dim H_p(\K_*(x_1,\ldots,x_n;V_q)(u))$, and $V_q=H_q({\mathcal X})$.
\end{thm}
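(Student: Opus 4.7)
The plan is to combine two alternating-sum estimates: one extracted from the long exact sequence of the pair $(X^u,U^u)$, where $U^u\=\bigcup_{j=1}^n X^{u-e_j}$, and one obtained by applying the standard Morse inequalities \eqref{eq:strongCd} page-by-page to the Mayer-Vietoris spectral sequence of Section \ref{sec:setupMV}. Throughout, set $k_i\=\dim H_i(X^u)$, $h_i\=\dim H_i(U^u)$, and $r_i\=\dim\im(H_i(U^u)\to H_i(X^u))$.

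First I would use the exactness of the LES of the pair to write $c_i(u) = (k_i-r_i)+(h_{i-1}-r_{i-1})$. Computing the alternating sum, an index shift in the second contribution together with telescoping of the $r_i$-terms yields
\[
\sum_{i=0}^q (-1)^{q-i} c_i(u)\;-\;\sum_{i=0}^q (-1)^{q-i}(k_i - h_i) \;=\; h_q-r_q \;=\;\dim\ker\bigl(H_q(U^u)\to H_q(X^u)\bigr)\;\ge\;0,
\]
so it suffices to prove $\sum_{i=0}^q (-1)^{q-i}(k_i-h_i) \ge \sum_{i=0}^q (-1)^{q-i}\bigl[\xi_0^i(u)-\sum_{p=1}^{i+1}\xi_p^{i+1-p}(u)\bigr]$.

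Next I would organize the Mayer-Vietoris spectral sequence by total degree: for each $r\ge 2$ set $T^r_k\=\bigoplus_{p+q=k} E^r_{p,q}$, and observe that $d^r$ decreases total degree by one, so $(T^r_\bullet,d^r)$ is a first-quadrant chain complex with homology $T^{r+1}_\bullet$. Applying \eqref{eq:strongCd} at each page and iterating from $r=2$ to infinity---together with the convergence $\dim T^\infty_i = h_i$ supplied by Proposition \ref{prop:union}---gives
\[
\sum_{i=0}^q (-1)^{q-i}h_i\;\le\;\sum_{i=0}^q(-1)^{q-i}\dim T^2_i.
\]
Then Proposition \ref{prop:MV2page}, combined with the identity $\dim\im\mer^u_q = k_q-\xi_0^q(u)$, sums the diagonal of the $2$-page to
\[
\dim T^2_i \;=\; k_i-\xi_0^i(u)+\sum_{p=1}^{i+1}\xi_p^{i+1-p}(u),
\]
so $k_i-\dim T^2_i$ is precisely the bracketed expression on the right-hand side of the theorem, and chaining the two inequalities finishes the proof.

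The hard part will be the bookkeeping in the column $p=0$ of the $E^2$-page: unlike the columns $p\ge 1$, whose entries are directly the Betti numbers $\xi_{p+1}^q(u)$, the term $\dim E^2_{0,q}$ carries an extra summand $k_q-\xi_0^q(u)$ coming from the truncation of the Koszul complex. It is exactly this extra summand that produces the cancellation of the $k_i$ terms in the final step, and tracking it carefully---alongside the index shift between the Koszul differentials of \eqref{eq:koszul-differential} and the horizontal differentials of the spectral sequence---is where one must be most careful.
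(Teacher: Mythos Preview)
Your proposal is correct and follows essentially the same approach as the paper. Both arguments combine (i) an alternating-sum estimate from the long exact sequence of the pair $(X^u,\cup_j X^{u-e_j})$ and (ii) the page-by-page Morse inequality for the Mayer-Vietoris spectral sequence, reading off the $E^2$-dimensions via Proposition~\ref{prop:MV2page}; the paper packages (i) as Proposition~\ref{prop:subadd_seq} and carries out (ii) by summing over the individual staircase complexes rather than passing to the total-degree complex $T^r_\bullet$, but the content is the same.
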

 
In our setting, the right-hand side of the inequality involves the Betti tables of the Koszul complex $\K_*(x_1,\ldots,x_n;V_q)$ of the persistence modules $V_q$ associated with a  multi-filtration ${\mathcal X}$ of $X$. They play the same role as the Betti numbers in standard Morse inequalities. Similarly, the left-hand side involves the homological Morse numbers $\mu_q(u)$, defined (see Section \ref{subsec:multipers}) as the dimension of the $q$-th relative homology group of the pair $(X^u, \cup_j X^{u-e_j})$. 

Before proving the theorem, let us state as a consequence an analogue of the weak Morse inequalities which follow from the strong ones of Theorem \ref{thm:strong_relative_xi} in the usual way (see Section \ref{sec:Morse-inequalities}).

\begin{coro}
\label{coro:weak_relative_xi}
For each $q\ge 0$, and each fixed grade $u\in\Z^n$, we have
\[
\mu_q (u) \ge   \xi_0^{q}(u) - \sum_{p=1}^{q+1}\xi_p^{q+1-p}(u).
\]
\end{coro}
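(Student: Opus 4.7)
The plan is to deduce the weak inequality by the same algebraic trick used in the classical setting (as recalled in Section~\ref{sec:Morse-inequalities}): combine two instances of the strong inequality of Theorem~\ref{thm:strong_relative_xi} at consecutive indices so that the alternating sums telescope and leave only the $q$th term on each side. Write $B_i(u) \= \xi_0^i(u) - \sum_{p=1}^{i+1}\xi_p^{i+1-p}(u)$ for the bracketed quantity on the right of the strong inequality. The key identity to exploit is
\[
c_q(u) \;=\; \sum_{i=0}^{q}(-1)^{q+i} c_i(u) \;+\; \sum_{i=0}^{q-1}(-1)^{q-1+i} c_i(u),
\]
in which all terms $c_i(u)$ with $i<q$ cancel in pairs because the two sums differ by a sign flip on the shared summands.

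First I would treat $q\ge 1$. I apply the strong inequality of Theorem~\ref{thm:strong_relative_xi} once at $q$, obtaining $\sum_{i=0}^{q}(-1)^{q+i} c_i(u) \ge \sum_{i=0}^{q}(-1)^{q+i} B_i(u)$, and once at $q-1$, obtaining $\sum_{i=0}^{q-1}(-1)^{q-1+i} c_i(u) \ge \sum_{i=0}^{q-1}(-1)^{q-1+i} B_i(u)$. Both inequalities point the same way (the leading coefficient on both sides is $+1$), so I can add them. The left-hand side collapses to $c_q(u)$ by the identity above, while the right-hand side collapses by exactly the same cancellation to $B_q(u) = \xi_0^q(u) - \sum_{p=1}^{q+1}\xi_p^{q+1-p}(u)$. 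This yields the desired bound.

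For the base case $q=0$, there is nothing to combine: the strong inequality at $q=0$ reads $c_0(u) \ge \xi_0^0(u) - \xi_1^0(u)$, which is precisely the claimed weak inequality when the empty/degenerate pieces of $B_0(u)$ are expanded. No step here is a genuine obstacle; the only thing to watch is the bookkeeping of signs when lining up the two strong inequalities, and the observation that the cancellation pattern on the $c_i(u)$ side is identical to the one on the $B_i(u)$ side, so that adding the two inequalities produces the single-term bound on both sides simultaneously.
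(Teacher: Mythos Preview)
Your proposal is correct and follows essentially the same approach as the paper: the paper's proof also writes $c_q(u) = \sum_{i=0}^q (-1)^{q+i} c_i(u) + \sum_{i=0}^{q-1} (-1)^{q-1+i} c_i(u)$, applies Theorem~\ref{thm:strong_relative_xi} at $q$ and at $q-1$, and lets the two alternating sums telescope on both sides to leave $c_q(u) \ge \xi_0^q(u) - \sum_{p=1}^{q+1}\xi_p^{q+1-p}(u)$. The only cosmetic difference is that you name the bracketed quantity $B_i(u)$ and single out the case $q=0$ explicitly, whereas the paper does it in one stroke.
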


\begin{proof}
Using the inequality of Theorem \ref{thm:strong_relative_xi} for both $q$ and $q-1$ we obtain
\begin{align*}
\mu_q(u)&= \sum_{i=0}^q (-1)^{q+i} \mu_i (u) + \sum_{i=0}^{q-1} (-1)^{q-1+i} \mu_i (u) 
\\
&\ge   \sum_{i=0}^q (-1)^{q+i} \left( \xi_0^i(u) - \sum_{p=1}^{i+1}\xi_p^{i+1-p}(u)    \right)  + \sum_{i=0}^{q-1} (-1)^{q-1+i} \left( \xi_0^i(u) - \sum_{p=1}^{i+1}\xi_p^{i+1-p}(u)    \right)
\\
&=  \xi_0^q(u) - \sum_{p=1}^{q+1}\xi_p^{q+1-p}(u) .
 \end{align*}
\end{proof}

Let us now prove Theorem \ref{thm:strong_relative_xi}. First,  let us recall a simple but useful fact.

\begin{prop}
\label{prop:subadd_seq}
In an exact sequence of finite-dimensional vector spaces with a final zero
\[
A_d \xrightarrow{f_d} B_d \xrightarrow{g_d} C_d \xrightarrow{h_d} A_{d-1} \xrightarrow{f_{d-1}} \cdots \xrightarrow{h_1} A_0 \xrightarrow{f_0} B_0 \xrightarrow{g_0} C_0 \xrightarrow{h_0} 0
\]
we have
\[
\sum_{i=0}^d (-1)^{d+i} \dim A_i +
\sum_{i=0}^d (-1)^{d+i} \dim C_i
\ge  \sum_{i=0}^d (-1)^{d+i} \dim B_i  .
\]
\end{prop}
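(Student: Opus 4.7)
The plan is to reduce the claimed inequality to the classical fact that the alternating sum of dimensions along a bounded exact sequence of finite-dimensional vector spaces vanishes. The given sequence already terminates in $0$ on the right, but is open on the left, and the failure to close is precisely measured by $\dim \ker f_d$, which will end up being the non-negative slack in the inequality.

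First I would extend the given sequence by prepending $K \coloneqq \ker f_d$, producing the honest exact sequence
\[
0 \to K \hookrightarrow A_d \xrightarrow{f_d} B_d \xrightarrow{g_d} C_d \xrightarrow{h_d} A_{d-1} \to \cdots \to A_0 \xrightarrow{f_0} B_0 \xrightarrow{g_0} C_0 \to 0 .
\]
Applying the standard Euler-characteristic identity to this sequence, i.e.\ $\sum_j (-1)^j \dim V_j = 0$ with $j$ indexing the terms from the right (so $C_i$ sits at position $3i$, $B_i$ at position $3i+1$, $A_i$ at position $3i+2$, and $K$ at position $3d+3$), yields
\[
(-1)^{3d+3}\dim K + \sum_{i=0}^d (-1)^i \bigl[\dim A_i - \dim B_i + \dim C_i\bigr] = 0,
\]
after using $(-1)^{3i}=(-1)^i$, $(-1)^{3i+1}=-(-1)^i$, $(-1)^{3i+2}=(-1)^i$.

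Finally, multiplying by $(-1)^{d+1}$ (which turns the sign in front of $\dim K$ into $+1$) and rearranging gives
\[
\sum_{i=0}^d (-1)^{d+i}\dim A_i \;-\; \sum_{i=0}^d (-1)^{d+i}\dim B_i \;+\; \sum_{i=0}^d (-1)^{d+i}\dim C_i \;=\; \dim K \;\ge\; 0,
\]
which is exactly the stated inequality. There is no real obstacle here: the only delicate point is the sign bookkeeping in the middle step, but it is a routine verification, and the non-negativity of the slack is a tautological consequence of $K$ being a subspace.
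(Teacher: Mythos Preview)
Your proof is correct and follows essentially the same approach as the paper: prepend $\ker f_d$ to obtain a bounded exact sequence, invoke the vanishing of the alternating sum of dimensions, and identify the slack as $\dim \ker f_d \ge 0$. Your version simply makes the sign bookkeeping more explicit than the paper does.
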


\begin{proof}
Consider the exact sequence 
\[
0\rightarrow \ker f_d \rightarrow A_d \xrightarrow{f_d} B_d \xrightarrow{g_d} C_d \xrightarrow{h_d} A_{d-1} \xrightarrow{f_{d-1}} \cdots \xrightarrow{h_1} A_0 \xrightarrow{f_0} B_0 \xrightarrow{g_0} C_0 \xrightarrow{h_0} 0 .
\]
The fact that the alternating sum of the dimensions vanishes can be expressed as
\[
\sum_{i=0}^d (-1)^{d+i} \dim A_i  +
\sum_{i=0}^d (-1)^{d+i} \dim C_i
= \dim \ker f_d + \sum_{i=0}^d (-1)^{d+i} \dim B_i ,
\]
which implies our claim.
\end{proof}

We are now ready to prove our strong Morse inequalities.

\begin{proof}[Proof of Theorem \ref{thm:strong_relative_xi}]
As usual, we denote by $\{ E^r_{p,q}, d^r_{p,q}\}$ the Mayer-Vietoris spectral sequence associated with $\{ X^{u-e_j}\}_{j\in [n]}$ for the fixed grade $u \in \Z^n$ of the statement. For each $r$, we will be interested only in the terms $E^r_{p,q}$ indexed by $(p,q)\in I_k$, where $I_k \= \{ (p,q)\in \Z^2 \mid p,q\ge 0 \text{ and } p+q \le k \}$, for a fixed $k\ge 0$. It is however convenient for the sake of bookkeeping to consider all $(p,q)\in \Z^2$ such that $p\ge 0$ and $0\le p+q\le k$, keeping in mind that $E^r_{p,q}=0$ if $q<0$. For fixed $r,p,k\ge 0$ consider the chain complex
\begin{equation}
\label{eq:truncated_Er}
\cdots \to E^{r}_{p,k-p} \xrightarrow{d^{r}_{p,k-p}} E^{r}_{p-r,k-p+r-1} \xrightarrow{d^{r}_{p-r,k-p+r-1}} \cdots \to 0 .
\end{equation}
It is clear that every such chain complex, built using the appropriate terms and differentials of the $r$-page, eventually ends with zero terms. Even if in the Mayer-Vietoris spectral sequence the chain complex (\ref{eq:truncated_Er}) may extend on the left with non-zero terms, we now consider only the portion displayed in (\ref{eq:truncated_Er}), 
restricting to terms of total degree not larger than $k$. By the standard strong Morse inequalities (\ref{eq:strongCd}) we have 
\[
\sum_{\ell \ge 0} (-1)^{\ell} \dim E^r_{p-\ell r,k-p+\ell (r-1)}  \ge \sum_{\ell \ge 0} (-1)^{\ell} \dim E^{r+1}_{p-\ell r,k-p+ \ell (r-1)} .
\]
Allowing $p\ge 0$ to vary, it is easy to observe that each term $E^r_{p',q'}$ with $(p',q')\in I_k$ appears in one (and only one) of the chain complexes (\ref{eq:truncated_Er}).
Keeping $k$ fixed, we can sum over all $p\ge 0$ and obtain 
\[
\sum_{p\ge 0}\sum_{\ell \ge 0} (-1)^{\ell} \dim E^r_{p-\ell r,k-p+\ell (r-1)}  \ge \sum_{p\ge 0}\sum_{\ell \ge 0} (-1)^{\ell} \dim E^{r+1}_{p-\ell r,k-p+\ell (r-1)} .
\]
The choice of signs in the alternating sums is such that the terms of total degree $k-i$ have sign $(-1)^i$, for each $i\ge 0$. We can therefore write this inequality as 
\begin{equation}
\label{eq:ineq_Er}
\sum_{i=0}^k (-1)^{k+i} \sum_{p+q=i} \dim E^r_{p,q}  \ge 
\sum_{i=0}^k (-1)^{k+i} \sum_{p+q=i} \dim E^{r+1}_{p,q} .
\end{equation}
Let us recall now that for the $n$-page, by convergence of the spectral sequence (see Proposition \ref{prop:union}) we have
\[
\sum_{i=0}^k (-1)^{k+i} \sum_{p+q=i} \dim E^n_{p,q}  = 
 \sum_{i=0}^k (-1)^{k+i} \dim H_i (\cup_j X^{u-e_j}) .
\]
By (repeatedly) applying (\ref{eq:ineq_Er}) we obtain 
\begin{equation}
\label{eq:thmproof_strong_doublecmp}
\sum_{i=0}^k (-1)^{k+i} \dim H_i (\cup_j X^{u-e_j}) =
 \sum_{i=0}^k (-1)^{k+i} \sum_{p+q=i} \dim E^n_{p,q} \le   \sum_{i=0}^k (-1)^{k+i} \sum_{p+q=i} \dim E^2_{p,q} , 
\end{equation}
and since by Proposition \ref{prop:MV2page} we know that $\sum_{p+q=i} \dim E^2_{p,q}= \dim H_i (X^u) -\xi_0^i (u) + \sum_{p=1}^{i+1}\xi_p^{i+1-p} (u)$,
we have
\begin{equation}
\label{eq:thmproof_strong_doublecmp2}
\sum_{i=0}^k (-1)^{k+i} \dim H_i (\cup_j X^{u-e_j}) \le   \sum_{i=0}^k (-1)^{k+i} \dim H_i (X^u) + \sum_{i=0}^k (-1)^{k+i} \left( \sum_{p=1}^{i+1}\xi_p^{i+1-p}(u)  -\xi_0^i(u)  \right) .
\end{equation}
On the other hand, applying Proposition \ref{prop:subadd_seq} to  the long exact sequence of relative homology of the pair $(X^u, \cup_j X^{u-e_j})$,
\[
H_k(\cup_j X^{u-e_j}) \xrightarrow{} H_k (X^u) \xrightarrow{} H_k (X^u, \cup_j X^{u-e_j}) \xrightarrow{} H_{k-1} (\cup_j X^{u-e_j})  \xrightarrow{} \cdots \xrightarrow{} H_0 (X^u, \cup_j X^{u-e_j}) \xrightarrow{} 0 ,
\]
yields the inequality
\begin{equation*}
\label{eq:subadd_relativehom}
\sum_{i=0}^k (-1)^{k+i} \dim H_i (\cup_j X^{u-e_j}) +
\sum_{i=0}^k (-1)^{k+i} \dim H_i (X^u, \cup_j X^{u-e_j})
\ge  \sum_{i=0}^k (-1)^{k+i} \dim H_i (X^u) ,
\end{equation*}
which combined with (\ref{eq:thmproof_strong_doublecmp2}) yields
\[
\sum_{i=0}^k (-1)^{k+i} \dim H_i (X^u, \cup_j X^{u-e_j}) \ge  - \sum_{i=0}^k (-1)^{k+i} \left( \sum_{p=1}^{i+1}\xi_p^{i+1-p}(u)  -\xi_0^i (u) \right) . 
\]
\end{proof}

\section{Euler characteristic  for persistence modules}
\label{sec:euler}

In this section we derive, using our strong Morse inequalities, Euler characteristic formulas for the relative homology of a multi-filtration ${\mathcal X}=\{ X^u \}_{u\in \Z^n}$ involving the Betti tables. 
Euler characteristic formulas are ubiquitous in homological algebra, as they are based on a general and well-known result valid for any free and bounded chain complex. For $n$-parameter persistence modules, for example, the Euler characteristic of a minimal free resolution is considered in \cite{gafvert2017stable}. 
Here, we consider instead the Euler characteristic of the chain complex $C_*(X^u, \cup_j X^{u-e_j})$ for any fixed grade $u\in \Z^n$.

Firstly, it is worth observing that, in our setting, the Euler characteristic of $C_*(X^u)$ can be expressed in terms of Betti tables as follows.

\begin{prop}
\label{prop:chi-non-relative}
For persistence modules $V_q=H_q({\mathcal X})$ obtained from an $n$-parameter filtration ${\mathcal X}=\{ X^u \}_{u\in \Z^n}$, it holds that
\[
\chi(X^u) \= \sum_{p,q\ge 0} (-1)^{p+q} \sum_{v\preceq u} \xi_{p}^{q} (v) ,
\]
with $\chi(X^u)\=\sum_{q\ge 0} (-1)^q \dim H_q (X^u)$.  
\end{prop}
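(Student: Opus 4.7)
The plan is to fix a homological degree $q$ and express $\dim H_q(X^u)=\dim V_q^u$ in terms of the Betti tables $\xi_p^q(\cdot)$ by using the minimal free resolution of the persistence module $V_q=H_q(\mathcal{X})$; the statement will then follow by multiplying by $(-1)^q$ and summing over $q$.

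First I would observe that, because $X$ is a finite cell complex and the filtration $\mathcal{X}$ is bounded, each $V_q$ is a finitely generated $n$-graded $S$-module, so it admits a minimal graded free resolution of length at most $n$
\[
0 \to F_n^q \to \cdots \to F_1^q \to F_0^q \to V_q \to 0,
\]
with $F_p^q=\bigoplus_{v\in\Z^n} S(-v)^{\xi_p^q(v)}$. Here I rely on the equivalence, recalled in Section \ref{sec:koszul_cmp}, between the Koszul-homology definition $\xi_p^q(v)=\dim H_p(\mathbb{K}_*(x_1,\ldots,x_n;V_q)(v))$ and the standard interpretation of $\xi_p^q(v)$ as the number of generators in degree $v$ of $F_p^q$, via $\Tor_p^S(V_q,\F)(v)$.

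Second, I would read off $\dim H_q(X^u)$ grade by grade. Since $\dim_{\F} S(-v)^u=1$ whenever $v\preceq u$ and $0$ otherwise, taking dimensions in degree $u$ along the resolution yields
\[
\dim H_q(X^u) = \sum_{p=0}^n (-1)^p \dim_{\F} F_p^q(u) = \sum_{p=0}^n (-1)^p \sum_{v\preceq u} \xi_p^q(v).
\]
Multiplying by $(-1)^q$, summing over $q\ge 0$, and using $\chi(X^u)=\sum_q (-1)^q \dim H_q(X^u)$ immediately produces the claimed identity. All sums are finite: $q$ is bounded by $\dim X$, the resolution has length at most $n$, and each $\xi_p^q$ has bounded support by finiteness of $X$ together with boundedness of $\mathcal{X}$.

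The only mildly delicate point is the appeal to the commutative-algebra fact that the Koszul-homology definition of the Betti tables matches the generator counts of a minimal free resolution; but this identification is standard and is already used in Section \ref{sec:koszul_cmp} of the paper, so I do not anticipate a genuine obstacle. As a sanity check, one can alternatively derive the formula without invoking minimal resolutions by noting that the Euler characteristic of the Koszul complex gives $\sum_p (-1)^p \xi_p^q(v)=\sum_{\sigma\subseteq[n]}(-1)^{|\sigma|}\dim H_q(X^{v-e_\sigma})$ and then performing Möbius inversion over $(\Z^n,\preceq)$, which telescopes to the same conclusion.
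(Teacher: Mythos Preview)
Your proof is correct and follows essentially the same route as the paper: both rely on the identity $\dim V_q^u=\sum_{p=0}^n(-1)^p\sum_{v\preceq u}\xi_p^q(v)$, which the paper quotes from \cite{lesnick2019computing} as a consequence of Hilbert's Syzygy theorem, while you unpack its standard proof via the minimal free resolution. The remaining step—multiplying by $(-1)^q$ and summing—is identical.
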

\begin{proof}
Proposition 2.3 of \cite{lesnick2019computing} 
states the following relation between the point-wise dimension at $u\in \Z^n$ of a (finitely presented) $n$-parameter persistence module $V$ and its Betti tables $\xi_j$, which is an easy consequence of Hilbert's Syzygy theorem:
\[
\dim V^u = \sum_{j=0}^n (-1)^j \sum_{v\preceq u} \xi_j (v) .
\]
Applying this formula in the particular case of  a persistence module $V_q=H_q({\mathcal X})$, we  obtain 
\[
\sum_{q\ge 0} (-1)^q \dim H_q (X^u) = \sum_{p,q\ge 0} (-1)^{p+q} \sum_{v\preceq u} \xi_{p}^{q} (v) .
\]
\end{proof}

We consider now the Euler characteristic of the pair $(X^u,\cup_j X^{u-e_j})$, defined as \[
\chi (X^u,\cup_j X^{u-e_j})\=\sum_{q\ge 0} (-1)^q \dim H_q (X^u,\cup_j X^{u-e_j}),
\]
which in our notation is equal to $\sum_{q\ge 0} (-1)^q \mu_q(u)$.
We derive the following result on the Euler characteristic $\chi (X^u,\cup_j X^{u-e_j})$ as a corollary of our strong Morse inequalities (Theorem \ref{thm:strong_relative_xi}). 

\begin{thm}
\label{thm:chi_relative_betti}
Given an $n$-parameter filtration ${\mathcal X}=\{ X^u \}_{u\in \Z^n}$, for each fixed grade $u\in \Z^n$ the Euler characteristic of the pair $(X^u,\cup_j X^{u-e_j})$ is related to the Betti tables of persistent homology of ${\mathcal X}$ by
\[
\chi (X^u,\cup_j X^{u-e_j}) = \sum_{0\le p+q \le d+1} (-1)^{p+q}  \xi_{p}^{q} (u) = \sum_{p,q} (-1)^{p+q}  \xi_{p}^{q} (u) ,
\]
where $d$ is the dimension of ${\mathcal X}$. 
\end{thm}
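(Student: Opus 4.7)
The strategy is to extract this Euler characteristic identity as the stable case of the strong Morse inequalities of Theorem \ref{thm:strong_relative_xi}: for $k$ sufficiently large, both of the inequalities used in that proof collapse to equalities, yielding an equality version of the strong Morse inequality, which after unpacking is the desired identity.

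More precisely, I would analyze each ingredient in the proof of Theorem \ref{thm:strong_relative_xi} separately. The long exact sequence inequality coming from Proposition \ref{prop:subadd_seq} becomes an equality once $H_{k+1}(X^u, \cup_j X^{u-e_j}) = 0$, because then the $\ker f_d$ term in that proposition vanishes and the standard alternating-sum identity for a bounded exact sequence applies; since $X^u$ has dimension at most $d$, this holds for $k \ge d$. The spectral sequence inequality (\ref{eq:ineq_Er}) becomes an equality for $k$ large enough, because the total Euler characteristic $\chi(E^r) := \sum_{p,q}(-1)^{p+q}\dim E^r_{p,q}$ is preserved when passing from one page to the next (a standard fact for bounded double complexes), and the Mayer--Vietoris spectral sequence is bounded since $E^r_{p,q} = 0$ for $p \ge n$ or $q > d$ whenever $r \ge 2$. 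Once $k$ is large enough for the truncated alternating sums to capture every nonzero term of each page, both sides of (\ref{eq:ineq_Er}) coincide with $\chi(E^r)$, and the inequality becomes equality.

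Combining these two observations, for $k$ sufficiently large the strong Morse inequality is an equality. Multiplying through by $(-1)^k$, its left-hand side collapses to $\sum_{i \ge 0}(-1)^i c_i(u) = \chi(X^u, \cup_j X^{u-e_j})$. On the right-hand side, the substitution $(p, i+1-p) \leftrightarrow (p, q)$ inside the double sum groups the contributions of $\xi_0^i(u)$ and $\xi_p^{i+1-p}(u)$ into a single expression $\sum_{p, q \ge 0} (-1)^{p+q} \xi_p^q(u)$, giving the second equality of the statement; the first equality then follows from the vanishing of $\xi_p^q(u)$ outside an appropriate bounded range (in particular $\xi_p^q = 0$ for $p > n$ or $q > d$). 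The main technical step is the re-indexing of the double sum with the correct signs, while the conceptual input is the invariance of Euler characteristics across pages of the Mayer--Vietoris spectral sequence, together with the dimensional vanishing of relative homology $H_{k+1}(X^u, \cup_j X^{u-e_j})$ above the dimension of $X$.
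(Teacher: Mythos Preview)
Your argument for the second equality is sound and matches the paper's strategy: for $k$ sufficiently large, both inequalities in the proof of Theorem~\ref{thm:strong_relative_xi} (the long exact sequence inequality from Proposition~\ref{prop:subadd_seq} and the spectral-sequence inequality~(\ref{eq:ineq_Er})) collapse to equalities, and the resulting identity rearranges to $\chi(X^u,\cup_j X^{u-e_j}) = \sum_{p,q}(-1)^{p+q}\xi_p^q(u)$. Your explicit justification via the page-invariance of $\chi(E^r)$ and the vanishing of $H_{k+1}(X^u,\cup_j X^{u-e_j})$ for $k\ge d$ is, if anything, more careful than the paper's terse appeal to the ``standard argument''.

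Your deduction of the \emph{first} equality, however, does not go through. You write that it ``follows from the vanishing of $\xi_p^q(u)$ outside an appropriate bounded range (in particular $\xi_p^q=0$ for $p>n$ or $q>d$)'', but that vanishing is insufficient: the entries $\xi_p^q(u)$ with $d+1<p+q\le d+n$ can be nonzero, as the paper's own remark after the theorem stresses. So one cannot pass from the full sum to the truncated sum $\sum_{p+q\le d+1}$ by dropping zero terms. The paper does \emph{not} derive the first equality from the second; it instead asserts that the strong Morse inequality is already an equality at $k=d$ and rearranges directly. Note that your Euler-characteristic argument for equality in~(\ref{eq:ineq_Er}) only applies once $k$ is at least the top total degree of the $E^2$-page, namely $d+n-1$, so it does not reach $k=d$. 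This step is in fact genuinely delicate: the ``standard argument'' needs both the $(d{+}1)$-st terms $c_{d+1}(u)$ and $\xi_0^{d+1}(u)-\sum_p \xi_p^{d+2-p}(u)$ to vanish, and while the former does, the latter need not. For instance, with $n=3$, take a circle $C$ with three $2$-cells $f_1,f_2,f_3$ each bounding $C$, with entrance grades $u-e_2-e_3$, $u-e_1-e_3$, $u-e_1-e_2$ respectively and $C$ entering at $u-e_1-e_2-e_3$; then $d=2$, no cell enters at $u$ so $\chi(X^u,\cup_j X^{u-e_j})=0$, yet the only nonzero Betti entries at $u$ are $\xi_1^2(u)=\xi_3^1(u)=1$, giving $\sum_{p+q\le 3}(-1)^{p+q}\xi_p^q(u)=-1\neq 0$. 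Thus the first equality as stated seems not to hold in general, and neither your reduction nor the paper's invocation of the standard argument supplies a valid proof of it; only the second equality is on firm ground.
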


\begin{rmk}
The first equality of the statement can be written also as $\chi (X^u,\cup_j X^{u-e_j}) = \sum_{i=0}^{d+1} (-1)^i \sum_{p=0}^i \xi_p^{i-p}(u)$.
The second equality of the statement is not a simple rewriting of the alternating sum, as the sum in the right-hand side ranges over all $p,q\in \Z$. Some possibly non-zero $\xi_p^q(u)$ are thus involved, which do not appear in the first alternating sum. Let us recall that in our setting the possibly non-zero $\xi_p^q(u)$ have indices $0\le p \le n$ and $0\le q \le d$.
\end{rmk}

\begin{rmk}
We can consider the Euler characteristic of an $n$-parameter persistence module as defined in \cite{gafvert2017stable} for the persistence module $V_q=H_q(\mathcal{X})$, for any fixed $q\ge 0$, which corresponds in our notations to $\chi(V_q)\= \sum_{p=0}^n (-1)^p \sum_{u\in \Z^n} \xi_p^q (u)$. Theorem \ref{thm:chi_relative_betti} clarifies its relation with $\chi (X^u, \cup_j X^{u-e_j})$:
\[
\sum_q (-1)^q \chi (V_q) = \sum_{u\in \Z^n} \chi (X^u, \cup_j X^{u-e_j}) .
\]
\end{rmk}  

\begin{proof}
For each fixed grade $u \in \Z^n$, it is clear that $\xi_p^q(u) =0$ whenever $q>d$. 
By the standard argument on the last strong inequality of a bounded chain complex (see Section \ref{sec:Morse-inequalities}), Theorem \ref{thm:strong_relative_xi} yields the equality 
\begin{equation}
\label{eq:chi_strong}
\sum_{i=0}^d (-1)^{d+i} \mu_i (u)  = \sum_{i=0}^d (-1)^{d+i} \left( \xi_0^i(u) - \sum_{p=0}^{i}\xi_{p+1}^{i-p}(u) \right) .
\end{equation}
The left-hand side of (\ref{eq:chi_strong}) is $ (-1)^d \chi(X^u, \cup_j X^{u-e_j})$ and the right-hand side can be rearranged as the sum
\begin{align*}
\sum_{i=0}^d (-1)^{d+i} \xi_0^i(u) - \sum_{i=0}^d (-1)^{d+i} \sum_{p=0}^{i}\xi_{p+1}^{i-p}(u) & =  \sum_{i=0}^{d+1} (-1)^{d+i} \xi_0^i(u) - \sum_{i=0}^d (-1)^{d+i} \sum_{p=0}^{i}\xi_{p+1}^{i-p}(u) \\
& = \sum_{i=0}^{d+1} (-1)^{d+i} \sum_{p=0}^{i}\xi_{p}^{i-p}(u) \\
& = \sum_{i=0}^{d+1} (-1)^{d+i} \sum_{p+q=i}\xi_{p}^{q}(u) \\
& = (-1)^d \sum_{i=0}^{d+1} (-1)^{i} \sum_{p+q=i}\xi_{p}^{q}(u), 
\end{align*}
where the first equation is obtained by subtracting $\xi_{0}^{d+1}(u) = 0$ and the following ones are obtained formally. This yields the first equality of the statement.

The second equality of the statement is obtained by repeating the proof with $m\= d+n$ in place of $d$, observing that $\xi_p^q(u) =0$ whenever $p+q>m$, as a consequence of the facts described in Section \ref{sec:setupMV}. 
\end{proof}

\section{Improving Morse inequalities}
\label{sec:improve}

In this section we improve the  weak Morse inequalities given in Corollary \ref{coro:weak_relative_xi}. Theorem \ref{thm:lower} gives a new lower bound for the homological Morse number $\mu_q (u)=\dim H_q (X^u,\cup_j X^{u-e_j})$ in terms of Betti tables. Reciprocally, Theorem \ref{thm:upper} will provide an upper bound for the homological Morse number $\mu_q(u)$. Finally, we will show that all these new inequalities are sharp.

\subsection{A new lower bound for homological Morse numbers}
We now derive Theorem \ref{thm:lower} improving the lower bound of Corollary \ref{coro:weak_relative_xi} for homological Morse numbers as a function of the Betti tables of the persistent homology modules.

Our strategy  is based again on the interplay between the long exact sequence of relative homology of $(X^u , \cup_j X^{u-e_j})$ and the Mayer-Vietoris spectral sequence. The connection between them is made via commutative triangles as in (\ref{eq:def_mer}). More precisely, we leverage Theorem \ref{thm:Einfty0}. The difference with Theorem \ref{thm:strong_relative_xi} is that we now track the Betti tables $\xi_p^q(u)$ (which appear as dimensions of the terms of the $2$-page of the spectral sequence) all the way to the $n$-page (which coincides with the $\infty$-page), to use then Theorem \ref{thm:Einfty0} and convergence of the spectral sequence to $H_* (\cup_j X^{u-e_j})$.

Before proving the lower bound inequality, let us show what we mean by ``tracking'' the Betti tables $\xi_p^q(u)$ by presenting the case of $n=3$ parameters as an example. Since the case of bifiltrations ($n=2$) is treated in \cite{landi2021discrete} using the Mayer-Vietoris long exact sequence, this represents the case  with the smallest number of parameters that requires the Mayer-Vietoris spectral sequence instead. As usual, we suppress in the notation of the spectral sequence the dependence on the fixed grade $u\in \Z^3$.

\noindent
{\bf Case $\mathbf{n=3}$.}
In this case, the $1$-page $\{ E^1_{p,q}, d^1_{p,q}\}$ of the Mayer-Vietoris spectral sequence of $\{ X^{u-e_j}\}_{j=1,2,3}$, for a fixed grade $u\in \Z^3$, consists of three non-null columns. As we said in Section \ref{sec:setupMV}, the rows correspond to  truncated Koszul complexes, for each degree of homology $q$:  
\begin{equation*} 
\begin{tikzcd}
\vdots  & \vdots   &    \vdots  \\
E^1_{0,2} =\bigoplus_{j} H_2 (X^{u-e_j})  & \arrow[l,"\overline{\delta}_{1,2}",swap] E^1_{1,2} =\bigoplus_{j<h} H_2(X^{u-e_j-e_h})   & \arrow[l,"\overline{\delta}_{2,2}",swap]  E^1_{2,2} =H_2 (X^{u-\sum_{i=1}^3 e_i})  \\
E^1_{0,1} =\bigoplus_{j} H_1 (X^{u-e_j})  & \arrow[l,"\overline{\delta}_{1,1}",swap] E^1_{1,1} =\bigoplus_{j<h} H_1 (X^{u-e_j-e_h})   & \arrow[l,"\overline{\delta}_{2,1}",swap]   E^1_{2,1} =H_1 (X^{u-\sum_{i=1}^3 e_i})  \\
E^1_{0,0} =\bigoplus_{j} H_0 (X^{u-e_j})  & \arrow[l,"\overline{\delta}_{1,0}",swap] E^1_{1,0} =\bigoplus_{j<h} H_0 (X^{u-e_j-e_h})   & \arrow[l,"\overline{\delta}_{2,0}",swap]   E^1_{2,0} =H_0 (X^{u-\sum_{i=1}^3 e_i})  \\
\end{tikzcd}
\end{equation*}
By taking homology we obtain the terms $E^2_{p,q}$ of the $2$-page. Differentials $d^2_{p,q} : E^2_{p,q} \to E^2_{p-2,q+1}$ between them are defined: 
\begin{equation*} 
\begin{tikzcd}
\vdots  & \vdots   &    \vdots  \\
E_{0,2}^2 = \coker \overline{\delta}_{1,2}  & E_{1,2}^2 =   \frac{\ker \overline{\delta}_{1,2}}{\im \overline{\delta}_{2,2}}  &  E_{2,2}^2 =   \ker \overline{\delta}_{2,2}  \\
E_{0,1}^2 = \coker \overline{\delta}_{1,1}  & E_{1,1}^2 =   \frac{\ker \overline{\delta}_{1,1}}{\im \overline{\delta}_{2,1}}  & \arrow[llu,"d^2_{2,1}" description] E_{2,1}^2 =   \ker \overline{\delta}_{2,1}  \\
E_{0,0}^2 = \coker \overline{\delta}_{1,0}  & E_{1,0}^2 =   \frac{\ker \overline{\delta}_{1,0}}{\im \overline{\delta}_{2,0}}  & \arrow[llu,"d^2_{2,0}" description] E_{2,0}^2 =   \ker \overline{\delta}_{2,0}  \\
\end{tikzcd}
\end{equation*}
By Proposition \ref{prop:MV2page}, the dimensions as vector spaces of the terms of the $2$-page are as follows: 
\begin{equation*} 
\begin{tikzcd}
\vdots  & \vdots   &  \vdots  \\
\dim E^2_{0,2}=\dim \im \mer^u_2 +\xi^2_1(u)  &   \dim E^2_{1,2}=\xi^2_2(u)   &  \dim E^2_{2,2}=\xi^2_3(u)  \\
\dim E^2_{0,1}=\dim \im \mer^u_1 +\xi^1_1(u)  &   \dim E^2_{1,1}=\xi^1_2(u)   &  \dim E^2_{2,1}=\xi^1_3(u)  \\
\dim E^2_{0,0}= \dim \im \mer^u_0 +\xi^0_1(u)  &   \dim E^2_{1,0}=\xi^0_2(u)   &  \dim E^2_{2,0}=\xi^0_3(u)  \\
\end{tikzcd}
\end{equation*}
In the $2$-page the only non-trivial differentials are of the form $d^2_{2,q}$, for $q\ge 0$, since for $p\ne 2$ either the domain or the target of the differentials $d^2_{p,q}$ is zero. The terms of the $3$-page can be expressed as:
\begin{equation*} 
\begin{tikzcd}
\vdots  & \vdots   &    \vdots  \\
E_{0,2}^3 = \coker d^2_{2,1}  & E_{1,2}^3 = E_{1,2}^2  &  E_{2,2}^3 =   \ker d^2_{2,2}  \\
E_{0,1}^3 = \coker d^2_{2,0}  & E_{1,1}^3 =  E_{1,1}^2  & E_{2,1}^3 =   \ker d^2_{2,1}  \\
E_{0,0}^3 = E_{0,0}^2  & E_{1,0}^3 =  E_{1,0}^2  & E_{2,0}^3 =   \ker d^2_{2,0}  \\
\end{tikzcd}
\end{equation*}
We observe that some terms, and in particular all terms $E^3_{1,q}$, have already stabilized at the $2$-page, meaning that taking homology with respect to differentials $d^2_{p,q}$ does not affect them. 
The dimensions of the terms of the $3$-page can be derived from the previous arguments:
\begin{equation*} 
\begin{tikzcd}
\vdots  & \vdots   &  \vdots  \\
\dim E^3_{0,2}= (\dim \im \mer^u_2 +\xi^2_1(u)) - \dim \im d^2_{2,1} &   \dim E^3_{1,2}=\xi^2_2(u)   &  \dim E^3_{2,2}=\xi^2_3(u) - \dim \im d^2_{2,2} \\
\dim E^3_{0,1}= (\dim \im \mer^u_1 +\xi^1_1(u)) - \dim \im d^2_{2,0} &   \dim E^3_{1,1}=\xi^1_2(u)   &  \dim E^3_{2,1}=\xi^1_3(u) - \dim \im d^2_{2,1} \\
\dim E^3_{0,0}= \dim \im \mer^u_0 +\xi^0_1(u)  &   \dim E^3_{1,0}=\xi^0_2(u)   &  \dim E^3_{2,0}=\xi^0_3(u) - \dim \im d^2_{2,0} \\
\end{tikzcd}
\end{equation*}
Let us recall that, for $n=3$, the $3$-page of the Mayer-Vietoris spectral sequence coincides with the $\infty$-page (Proposition \ref{prop:union}). 
We have thus kept track of the Betti tables $\xi_p^q(u)$ within the $3$-page (that is, $\infty$-page), meaning that we have found expressions for the dimensions of the terms $E^3_{p,q}$ involving the Betti tables. Below, we will detail in the general case how this can be used to derive the lower bound inequality for $\mu_q(u)$. 

\noindent
{\bf General case $\mathbf{n\ge 2}$.}
In order to generalize our argument for multi-filtrations with any number $n\ge 2$ of parameters, let us prove the following general fact: 

\begin{prop}
\label{prop:ineqEr+1}
For a spectral sequence $\{ E^r_{p,q}, d^r_{p,q}\}$ of finite dimensional vector spaces, the following statements hold for all $p,q$ and for all $r\ge 2$:
\begin{enumerate}
\item
\hfill%
$\displaystyle{\dim E^{r+1}_{p,q} = \dim E^{2}_{p,q} - \sum_{i=2}^{r} \dim \im d^{i}_{p,q} - \sum_{i=2}^{r} \dim \im d^{i}_{p+i,q-i+1};}$\hfill%
\mbox{}%
\item 
\hfill%
$\displaystyle{
\dim E^{2}_{p,q} - \sum_{i=2}^{r} \dim E^{2}_{p-i,q+i-1} - \sum_{i=2}^{r} \dim E^{2}_{p+i,q-i+1} \le \dim E^{r+1}_{p,q} \le \dim E^{2}_{p,q}.}$
\hfill\mbox{}
\end{enumerate}
\end{prop}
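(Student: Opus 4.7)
The plan is to prove part (1) by straightforward induction on $r$, using only the defining property of the pages of a spectral sequence, and then derive part (2) as an immediate corollary by bounding the dimensions of the images in terms of dimensions of earlier pages.

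For part (1), I would begin by recalling that, by definition, $E^{r+1}_{p,q}$ is the homology of the chain complex
\[
E^r_{p+r,q-r+1} \xrightarrow{d^r_{p+r,q-r+1}} E^r_{p,q} \xrightarrow{d^r_{p,q}} E^r_{p-r,q+r-1},
\]
so that $\dim E^{r+1}_{p,q} = \dim E^r_{p,q} - \dim \im d^r_{p,q} - \dim \im d^r_{p+r,q-r+1}$. The base case $r=2$ of the claimed formula is exactly this identity. For the inductive step, I would substitute the inductive hypothesis for $\dim E^{r+1}_{p,q}$ into the analogous identity relating $E^{r+2}_{p,q}$ to $E^{r+1}_{p,q}$ and collect the two new terms $\dim \im d^{r+1}_{p,q}$ and $\dim \im d^{r+1}_{p+r+1,q-r}$ into the two sums, extending their upper limit from $r$ to $r+1$.

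For part (2), the upper bound $\dim E^{r+1}_{p,q} \le \dim E^2_{p,q}$ follows at once from part (1), since each summand subtracted there is non-negative. For the lower bound, the key observation is twofold: first, $\dim \im d^i_{p,q} \le \dim E^i_{p-i,q+i-1}$ and $\dim \im d^i_{p+i,q-i+1} \le \dim E^i_{p+i,q-i+1}$, simply because the image of a linear map is contained in its codomain (respectively, is a quotient of its domain); second, since every page past the second is obtained as a subquotient of the previous one, one has $\dim E^i_{p',q'} \le \dim E^2_{p',q'}$ for all $i\ge 2$ and all bidegrees $(p',q')$. Chaining these two inequalities bounds each image in part (1) by the corresponding $\dim E^2$-term, yielding exactly the claimed lower bound.

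There is no real obstacle here: the result is essentially a bookkeeping consequence of the definition of a spectral sequence together with the trivial rank inequality $\dim \im f \le \min(\dim \mathrm{dom}\, f, \dim \mathrm{codom}\, f)$. The only thing to be careful about is the indexing of the differentials $d^i$ entering and leaving the bidegree $(p,q)$, which one can keep straight by drawing the staircase pattern of arrows $E^i_{p+i,q-i+1} \to E^i_{p,q} \to E^i_{p-i,q+i-1}$ on the page.
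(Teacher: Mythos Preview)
Your proposal is correct and follows essentially the same approach as the paper: both derive part (1) from the one-step identity $\dim E^{r+1}_{p,q} = \dim E^r_{p,q} - \dim \im d^r_{p,q} - \dim \im d^r_{p+r,q-r+1}$ iterated down to the $2$-page, and both obtain part (2) by bounding each $\dim \im d^i$ first by the corresponding $\dim E^i$ and then by $\dim E^2$ via the subquotient property. The only cosmetic difference is that the paper justifies the upper bound in (2) directly from the subquotient property rather than from part (1), but this is immaterial.
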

\begin{proof}
\textit{1}. 
For each pair of fixed indices $p,q$, at each page $r$ there are differentials
\begin{equation*}
\label{eq:diffsEr}
E^r_{p-r,q+r-1}  \xleftarrow{d^r_{p,q}}  E^r_{p,q} \xleftarrow{d^r_{p+r,q-r+1}}  E^r_{p+r,q-r+1} ,
\end{equation*}
so the dimension of $E^{r+1}_{p,q}\cong \ker d^r_{p,q} / \im d^r_{p+r,q-r+1}$ is  
\[ \dim E^{r+1}_{p,q} = \dim \ker d^r_{p,q} - \dim \im d^r_{p+r,q-r+1} = \dim E^r_{p,q} - \dim \im d^r_{p,q} - \dim \im d^r_{p+r,q-r+1} .
\]
This argument can be applied recursively to $\dim E^i_{p,q}$ for all $r\ge i >2$.

\textit{2}.
The inequality $\dim E^{r+1}_{p,q} \le \dim E^{2}_{p,q}$ follows from the fact that $E^{r+1}_{p,q}$ is a subquotient of $E^{r}_{p,q}$, for each $r$. To prove the other inequality, let us observe that,
since the dimension of the image of a linear map is upper bounded by the dimension of both the domain and the codomain, for each differential $d^i_{p,q}$ we have
\[
\dim \im d^i_{p,q} \le \dim E^i_{p-i,q+i-1}
\]
and for each differential $d^i_{p+i,q-i+1}$ we have
\[
\dim \im d^i_{p+i,q-i+1} \le \dim E^i_{p+i,q-i+1} .
\]
We can now apply to the right hand side of the equation of (\textit{1}.) the inequalities
\begin{align*}
\dim \im d^i_{p,q} & \le \dim E^i_{p-i,q+i-1} \le \dim E^2_{p-i,q+i-1} \\
\dim \im d^i_{p+i,q-i+1} & \le \dim E^i_{p+i,q-i+1} \le \dim E^2_{p+i,q-i+1},
\end{align*}
for any $2\le i\le r$. 
\end{proof}

It is worth observing that, depending on the indices $p,q,r$, several differentials and terms in Proposition \ref{prop:ineqEr+1} can be trivial in our situation.
For example, in the Mayer-Vietoris spectral sequence associated with an $n$-parameter filtration we know that $\im d^i_{p,q}$ and 
$E^2_{p-i,q+i-1}$ are zero whenever $p<i$, while $\im d^i_{p+i,q-i+1}$ and $E^2_{p+i,q-i+1}$ are zero whenever $p+i\ge n$.

Moving toward the proof of the lower bound inequality (Theorem \ref{thm:lower}), the following simple fact will be useful.

\begin{lem}
\label{lem:dim_coker_ker_i}
Consider the map $i^u_q : H_q (\cup_j X^{u-e_j}) \to H_q (X^u)$ and the commutative triangle  $\mer^u_q = i^u_q \overline{\varepsilon}_q^u$ as in  (\ref{eq:def_mer}). It holds that
\[
\dim \im i^u_q  \le \dim \im \mer^u_q + \dim H_q (\cup_j X^{u-e_j}) - \dim \im \overline{\varepsilon}_q^u .
\]
\end{lem}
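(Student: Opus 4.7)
The plan is to apply Proposition \ref{prop:triangle} directly to the commutative triangle $\mer_q^u = i_q^u \circ \overline{\varepsilon}_q^u$ shown in diagram (\ref{eq:def_mer}). Setting $f = \overline{\varepsilon}_q^u$ with domain $U = \bigoplus_j H_q(X^{u-e_j})$ and codomain $V = H_q(\cup_j X^{u-e_j})$, and $g = i_q^u$ with codomain $W = H_q(X^u)$, the composition $h = g f$ is exactly $\mer_q^u$, so the hypotheses of Proposition \ref{prop:triangle} are met.

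Applying that proposition yields the equality
\[
\dim \im i_q^u = \dim \im \mer_q^u + \dim H_q(\cup_j X^{u-e_j}) - \dim\bigl(\im \overline{\varepsilon}_q^u + \ker i_q^u\bigr).
\]
To turn this into the claimed inequality, I only need the trivial observation that $\im \overline{\varepsilon}_q^u$ is a subspace of $\im \overline{\varepsilon}_q^u + \ker i_q^u$, hence
\[
\dim\bigl(\im \overline{\varepsilon}_q^u + \ker i_q^u\bigr) \ge \dim \im \overline{\varepsilon}_q^u.
\]
Substituting this lower bound for $\dim(\im \overline{\varepsilon}_q^u + \ker i_q^u)$ (which appears with a minus sign) gives the desired upper bound for $\dim \im i_q^u$.

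There is essentially no obstacle here: the statement is a one-line consequence of Proposition \ref{prop:triangle} plus the inclusion $\im \overline{\varepsilon}_q^u \subseteq \im \overline{\varepsilon}_q^u + \ker i_q^u$. The only subtlety is conceptual, namely recognising that the lemma is set up precisely so that this substitution produces a bound in terms of $\dim \im \overline{\varepsilon}_q^u$, which is the quantity that will later be controlled via Theorem \ref{thm:Einfty0} (where $\dim \im \overline{\varepsilon}_q^u = \dim E^\infty_{0,q}$) in the subsequent proof of Theorem \ref{thm:lower}.
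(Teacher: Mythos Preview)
Your proof is correct and follows exactly the same approach as the paper: apply Proposition \ref{prop:triangle} to the triangle $\mer^u_q = i^u_q \circ \overline{\varepsilon}^u_q$ to get the equality, then use $\dim(\im \overline{\varepsilon}^u_q + \ker i^u_q) \ge \dim \im \overline{\varepsilon}^u_q$ to obtain the inequality.
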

\begin{proof}
Let us preliminarily observe that, given a composition $h$ of linear maps $f:U\to V$ followed by $g:V\to W$  between finite dimensional vector spaces, we have  
 \begin{equation}\label{eq:triangle}
 \dim \im g = \dim \im h + \dim V - \dim (\im f + \ker g) .
\end{equation}
 Indeed, from
 \[ h(U) = g(f(U)) \cong f(U)/ \{ x\in f(U) \mid g(x) =0 \}
 \]
we obtain $\dim \im h = \dim \im f - \dim (\im f \cap \ker g)$. Now we can sum $\dim \ker g$ to both sides of the equation, use the rank-nullity formula $\dim \ker g = \dim V - \dim \im g$ on the left-hand side, use Mayer-Vietoris' formula to express the right-hand side as $\dim (\im f + \ker g)$, and rearrange to obtain (\ref{eq:triangle}).

Applying Equation (\ref{eq:triangle}) to the commutative triangle (\ref{eq:def_mer}) yields
\begin{equation*}
\label{eq:coker_i_1}
\dim \im i^u_q = \dim \im \mer^u_q + \dim H_q (\cup_j X^{u-e_j}) - \dim (\im \overline{\varepsilon}_q^u + \ker i^u_q ).
\end{equation*}
We obtain the stated inequality by observing that  $\dim (\im \overline{\varepsilon}_q^u + \ker i^u_q ) \ge \dim \im \overline{\varepsilon}_q^u$.
\end{proof}

 \begin{thm}
\label{thm:lower}
For an $n$-parameter filtration $\{ X^u \}_{u\in \Z^n}$, for each grade $u\in \Z^n$, and for each $q\ge 0$, we have
\begin{equation*}
\label{eq:dim_Hrel_ineq}
\mu_q (u) \ge \xi^q_0 (u) + \xi^{q-1}_1 (u) - \sum_{i=1}^{n-1} \xi_{i+1}^{q-i} (u) + R, 
\end{equation*}
where
\begin{equation*} R =  \sum_{r=2}^{n-1} \left( \sum_{i=1}^{r-1} \dim \im d^r_{i,q-i} + \sum_{i=r+1}^{n-1} \dim \im d^r_{i,q-i} +  \sum_{i=1}^{n-1} \dim \im d^r_{i+r,q-i-r+1} \right)
\end{equation*}
is a non-negative integer.
\end{thm}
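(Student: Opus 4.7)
The plan is to combine the long exact sequence of the pair $(X^u,\cup_j X^{u-e_j})$ with the Mayer-Vietoris spectral sequence associated with $\{X^{u-e_j}\}_{j\in[n]}$, tracking the Betti tables all the way to the $\infty$-page. The long exact sequence gives
\[
c_q(u) = \dim H_q(X^u) - \dim\im i^u_q + \dim\ker i^u_{q-1},
\]
so the task reduces to bounding from below each of the two non-trivial summands on the right.

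For the first summand I would apply Lemma \ref{lem:dim_coker_ker_i}, which yields
\[
\dim H_q(X^u) - \dim\im i^u_q \ge \bigl(\dim H_q(X^u) - \dim\im\mer^u_q\bigr) - \bigl(\dim H_q(\cup_j X^{u-e_j}) - \dim\im\overline{\varepsilon}^u_q\bigr).
\]
The first parenthesis equals $\xi^q_0(u)$ by the remark after Proposition \ref{prop:MV2page}, and Theorem \ref{thm:Einfty0} combined with convergence (Proposition \ref{prop:union}) rewrites the second parenthesis as $\sum_{i=1}^{n-1}\dim E^{\infty}_{i,q-i}$. So this first piece is bounded below by $\xi^q_0(u)-\sum_{i=1}^{n-1}\dim E^{\infty}_{i,q-i}$.

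For the second summand, the factorization $\mer^u_{q-1}=i^u_{q-1}\circ\overline{\varepsilon}^u_{q-1}$ in diagram (\ref{eq:def_mer}) gives $\dim\ker i^u_{q-1}\ge\dim\bigl(\im\overline{\varepsilon}^u_{q-1}\cap\ker i^u_{q-1}\bigr) = \dim\im\overline{\varepsilon}^u_{q-1}-\dim\im\mer^u_{q-1} = \dim E^{\infty}_{0,q-1}-\dim\im\mer^u_{q-1}$, using Theorem \ref{thm:Einfty0} at the last step. Proposition \ref{prop:ineqEr+1}(1) at $(p,q)=(0,q-1)$ then yields $\dim E^{\infty}_{0,q-1}=\dim\im\mer^u_{q-1}+\xi^{q-1}_1(u)-\sum_{r=2}^{n-1}\dim\im d^r_{r,q-r}$, since the outgoing differentials $d^r_{0,q-1}$ all vanish (their target sits in a negative column). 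Hence the second piece is bounded below by $\xi^{q-1}_1(u)-\sum_{r=2}^{n-1}\dim\im d^r_{r,q-r}$.

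To finish, I apply Proposition \ref{prop:ineqEr+1}(1) once more to each $\dim E^{\infty}_{i,q-i}$ with $1\le i\le n-1$, expanding it as $\xi^{q-i}_{i+1}(u)$ minus contributions of the form $\dim\im d^r_{i,q-i}$ and $\dim\im d^r_{i+r,q-i-r+1}$. Putting everything together, the extra $-\sum_{r=2}^{n-1}\dim\im d^r_{r,q-r}$ coming from the $\ker i^u_{q-1}$ estimate precisely removes the diagonal $i=r$ terms from the double sum $\sum_{i=1}^{n-1}\sum_{r=2}^{n-1}\dim\im d^r_{i,q-i}$, leaving the split $\{1,\dots,r-1\}\cup\{r+1,\dots,n-1\}$ that appears in $R$. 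The main obstacle is precisely this index bookkeeping: one must identify the $p=0$ column of the $\infty$-page (which carries the $\xi^{q-1}_1(u)$ contribution) as the place into which the differentials $d^r_{r,q-r}$ land, and verify that these are exactly the terms that would otherwise have been double-counted; once this is done, non-negativity of $R$ is automatic since each summand is a dimension.
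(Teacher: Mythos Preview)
Your proposal is correct and follows essentially the same route as the paper: start from $c_q(u)=\dim\coker i^u_q+\dim\ker i^u_{q-1}$, bound each summand via the triangle $\mer^u=i^u\circ\overline{\varepsilon}^u$ together with Theorem~\ref{thm:Einfty0} and Proposition~\ref{prop:union}, then expand each $\dim E^{\infty}_{p,*}$ using Proposition~\ref{prop:ineqEr+1}(1) and Proposition~\ref{prop:MV2page}, and finally cancel the $i=r$ terms against the incoming differentials hitting the $p=0$ column. The only cosmetic difference is that the paper applies Lemma~\ref{lem:dim_coker_ker_i} uniformly to both $\dim\im i^u_q$ and $\dim\im i^u_{q-1}$, whereas you bound $\dim\ker i^u_{q-1}$ directly via $\im\overline{\varepsilon}^u_{q-1}\cap\ker i^u_{q-1}$; these yield the same inequality.
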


\begin{proof}
By standard application of the rank-nullity formula to the long exact sequence of the pair $(X^u, \cup_j X^{u-e_j})$, we know that $\dim H_q (X^u , \cup_j X^{u-e_j}) = \dim \coker i^u_q + \dim \ker i^u_{q-1}$. Hence,  we get 
\begin{equation} 
\label{eq:cq_imi}
\mu_q (u) =  (\dim H_q (X^u) -\dim \im i^u_q ) + ( \dim H_{q-1}(\cup_j X^{u-e_j}) -\dim \im i^u_{q-1} ) .
\end{equation}
On the right hand side we can apply Lemma \ref{lem:dim_coker_ker_i} to both $\dim \im i^u_q$ and $\dim \im i^u_{q-1}$. We obtain 
\begin{align} 
\begin{split}
\label{eq:cq1}
\mu_q(u) & \ge \xi^q_0 (u) - \sum_{i=0}^{n-1} \dim E^{n}_{i,q-i} + \dim E^{n}_{0,q} - \dim \im \mer^u_{q-1} + \dim E^{n}_{0,q-1} \\
& = \xi^q_0 (u) - \sum_{i=1}^{n-1} \dim E^{n}_{i,q-i}  - \dim \im \mer^u_{q-1} + \dim E^{n}_{0,q-1}
\end{split}
\end{align}
by recalling that $\im \overline{\varepsilon}_q^u \cong E^{\infty}_{0,q} \cong E^{n}_{0,q}$  and $H_q (\cup_j X^{u-e_j}) \cong E^{n}_{0,q} \oplus E^{n}_{1,q-1} \oplus \cdots \oplus E^{n}_{n-1,q-n+1}$ (Theorem \ref{thm:Einfty0} and Proposition \ref{prop:union}), together with the fact that $\xi_0^q(u) = \dim H_q (X^u) - \dim \im \mer_q^u$ (Section \ref{sec:koszul_cmp}).
We can now observe that, by Proposition \ref{prop:ineqEr+1},
\[
\dim E^{n}_{p,q} = \dim E^2_{p,q} - \sum_{r=2}^{n-1} \dim \im d^r_{p,q}  - \sum_{r=2}^{n-1} \dim \im d^r_{p+r,q-r+1} ,
\]
to express the last member of the inequality (\ref{eq:cq1}) as
\begin{multline*} \xi^q_0 (u) - \sum_{i=1}^{n-1} \left( \dim E^2_{i,q-i} - \sum_{r=2}^{n-1} \dim \im d^r_{i,q-i}  - \sum_{r=2}^{n-1} \dim \im d^r_{i+r,q-i-r+1} \right) \\
- \dim \im \mer^u_{q-1} + \left( \dim E^2_{0,q-1} - \sum_{r=2}^{n-1} \dim \im d^r_{0,q-1}  - \sum_{r=2}^{n-1} \dim \im d^r_{r,q-r}  \right) .
\end{multline*}
 Proposition \ref{prop:MV2page} states that $\dim E^2_{p,q} = \xi^q_{p+1} (u)$ when $p>0$, and $\dim E^2_{0,q} = \dim \im \mer^u_q + \xi^q_1 (u)$. Upon substitution of these terms in the previous expression we obtain 
\begin{multline*} \xi^q_0 (u) - \sum_{i=1}^{n-1} \left( \xi^{q-i}_{i+1}(u) - \sum_{r=2}^{n-1} \dim \im d^r_{i,q-i}  - \sum_{r=2}^{n-1} \dim \im d^r_{i+r,q-i-r+1} \right) \\
- \dim \im \mer^u_{q-1} + \dim \im \mer^u_{q-1} + \xi^{q-1}_1 (u) - \sum_{r=2}^{n-1} \dim \im d^r_{0,q-1}  - \sum_{r=2}^{n-1} \dim \im d^r_{r,q-r} ,
\end{multline*}
and rearranging:
\begin{multline*} \xi^q_0 (u)  + \xi^{q-1}_1 (u) - \sum_{i=1}^{n-1} \xi^{q-i}_{i+1}(u) + \sum_{i=1}^{n-1} \sum_{r=2}^{n-1} \dim \im d^r_{i,q-i} + \sum_{i=1}^{n-1} \sum_{r=2}^{n-1} \dim \im d^r_{i+r,q-i-r+1}  \\
 - \sum_{r=2}^{n-1} \dim \im d^r_{0,q-1}  - \sum_{r=2}^{n-1} \dim \im d^r_{r,q-r} .
\end{multline*}
We can now observe that $\sum_{r=2}^{n-1}\dim \im d^r_{0,q-1}=0$, since all the involved differentials target zero spaces, and that all the summands of $\sum_{r=2}^{n-1} \dim \im d^r_{r,q-r}$ cancel out with some summands of $\sum_{i=1}^{n-1} \sum_{r=2}^{n-1} \dim \im d^r_{i,q-i}$, namely those for which $i=r$. 
\end{proof}

We refer to the inequality $\mu_q (u) \ge \xi^q_0 (u) + \xi^{q-1}_1 (u) - \sum_{i=1}^{n-1} \xi_{i+1}^{q-i} (u)$ of Theorem \ref{thm:lower} as lower bound for $\mu_q (u)$ in terms of the Betti tables.

\subsection{An upper bound for homological Morse numbers}
\label{sec:upper}
 
We prove an upper bound in terms of the Betti tables for the homological Morse numbers $\mu_q(u)$ of an $n$-parameter filtration, with $n\ge 2$.

\begin{prop}
\label{prop:union2}
For an $n$-parameter filtration $\{ X^u \}_{u\in \Z^n}$, for each grade $u\in \Z^n$, and for each $q\ge 0$, we have
\begin{equation*}  \dim H_q (\cup_j X^{u-e_j}) 
\le  \dim \im \mer^u_q + \sum_{i=1}^n \xi_{i}^{q-i+1}(u) .
\end{equation*}  
\end{prop}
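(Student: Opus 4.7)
The plan is to exploit the convergence of the Mayer--Vietoris spectral sequence together with the fact that the dimensions of the terms only decrease as we pass from the $2$-page to the $\infty$-page. Concretely, by Proposition \ref{prop:union} we have the isomorphism
\[
H_q(\cup_j X^{u-e_j}) \;\cong\; \bigoplus_{i=0}^{n-1} E^{n}_{i,q-i},
\]
so it suffices to bound each $\dim E^n_{i,q-i}$ from above by the corresponding term on the $2$-page.

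The key tool is Proposition \ref{prop:ineqEr+1}, part (\textit{2}.), which yields $\dim E^{r+1}_{p,q'}\le \dim E^{2}_{p,q'}$ for every $r\ge 2$; iterating from $r=2$ up to $r=n-1$ gives $\dim E^{n}_{p,q'}\le \dim E^{2}_{p,q'}$. Summing over the diagonal $p+q'=q$ with $0\le p\le n-1$ (all other terms vanish by the vanishing range of the first-quadrant Mayer--Vietoris spectral sequence recalled in Proposition \ref{prop:MV2page}) then produces
\[
\dim H_q(\cup_j X^{u-e_j}) \;\le\; \sum_{i=0}^{n-1} \dim E^{2}_{i,q-i}.
\]

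At this point I would invoke Proposition \ref{prop:MV2page}, which identifies the $p=0$ column via $\dim E^2_{0,q}=\dim\im\mer^u_q + \xi^q_1(u)$ and the remaining columns via $\dim E^2_{p,q-p}=\xi^{q-p}_{p+1}(u)$ for $1\le p\le n-1$. Substituting these expressions gives
\[
\dim H_q(\cup_j X^{u-e_j}) \;\le\; \dim\im\mer^u_q + \xi^q_1(u) + \sum_{p=1}^{n-1}\xi^{q-p}_{p+1}(u).
\]

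The final step is a purely cosmetic reindexing of the sum. Setting $i=p+1$ in the second summation and absorbing the $\xi^q_1(u)$ term (which corresponds to $i=1$), we obtain $\xi^q_1(u)+\sum_{p=1}^{n-1}\xi^{q-p}_{p+1}(u)=\sum_{i=1}^{n}\xi_i^{q-i+1}(u)$, yielding the stated inequality. There is no real obstacle here: the argument is a direct bookkeeping exercise combining Propositions \ref{prop:union}, \ref{prop:ineqEr+1}, and \ref{prop:MV2page}; the only thing to watch is the re-indexing and the fact that losses from the higher differentials $d^r$ (for $r\ge 2$) are simply discarded in the upper bound, which is what makes the estimate loose in general but sufficient for the desired inequality.
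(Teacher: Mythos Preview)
Your proof is correct and follows essentially the same route as the paper: decompose $H_q(\cup_j X^{u-e_j})$ via Proposition~\ref{prop:union}, bound each $\dim E^n_{i,q-i}$ by $\dim E^2_{i,q-i}$ using Proposition~\ref{prop:ineqEr+1}, and then identify the $E^2$ dimensions through Proposition~\ref{prop:MV2page}. A minor remark: no iteration is needed in your second step, since Proposition~\ref{prop:ineqEr+1}(\textit{2}) already gives $\dim E^{r+1}_{p,q'}\le \dim E^{2}_{p,q'}$ directly for $r=n-1$.
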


\begin{proof} 
By Proposition \ref{prop:union}, $H_q (\cup_j X^{u-e_j}) \cong E^{n}_{0,q} \oplus E^{n}_{1,q-1} \oplus \cdots \oplus E^{n}_{n-1,q-n+1}$. For all $p,q$ we have $\dim E^n_{p,q} \le \dim E^2_{p,q}$ (Proposition \ref{prop:ineqEr+1}).
We obtain
\[
\dim H_q (\cup_j X^{u-e_j}) 
\le \dim E^2_{0,q} + \dim E^2_{1,q-1}+\cdots + \dim E^2_{n-1,q-n+1},
\]
hence the claim is a consequence of Proposition \ref{prop:MV2page}.
\end{proof}
Keeping in mind the equality $\dim \im \mer^u_q = \dim H_q (X^u) -\xi_0^q(u)$, we note that, in the case of $n=2$ parameters, we have $E^2_{p,q}=E^\infty_{p,q}$ for all $p,q$  (Proposition \ref{prop:union}), hence Proposition \ref{prop:union2} can be stated as an equality: 
\begin{align*}  \dim H_q (X^{u-e_1}\cup X^{u-e_2}) = \dim H_q (X^u) -\xi_0^q(u) +\xi_1^q(u) + \xi^{q-1}_{2}(u).
\end{align*}

We can now prove the following upper bound:

\begin{thm}
\label{thm:upper}
For an $n$-parameter filtration $\{ X^u \}_{u\in \Z^n}$, for each grade $u\in \Z^n$, and for each $q\ge 0$, we have
\[ \mu_q(u)  \le \sum_{i=0}^n \xi_i^{q-i}(u) .
\]
\end{thm}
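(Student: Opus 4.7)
The plan is to start from the long exact sequence of the pair $(X^u, \cup_j X^{u-e_j})$ and reuse the rank-nullity decomposition
\[
c_q(u) = \dim \coker i^u_q + \dim \ker i^u_{q-1}
\]
already exploited in the proof of Theorem \ref{thm:lower}. The task then reduces to bounding each of these two summands above by a suitable partial sum of values of the Betti tables $\xi_i^{q-i}(u)$, with no error term to absorb.

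The key observation is that the factorization $\mer^u_q = i^u_q \circ \overline{\varepsilon}^u_q$ of diagram (\ref{eq:def_mer}) yields the inclusion $\im \mer^u_q \subseteq \im i^u_q$, and hence $\dim \im i^u_q \ge \dim \im \mer^u_q$. From this I would deduce
\[
\dim \coker i^u_q = \dim H_q(X^u) - \dim \im i^u_q \le \dim H_q(X^u) - \dim \im \mer^u_q = \xi_0^q(u),
\]
using the identity $\dim \im \mer^u_q = \dim H_q(X^u) - \xi_0^q(u)$ recalled at the end of Section \ref{sec:koszul_cmp}. Applying the same factorization in degree $q-1$ gives
\[
\dim \ker i^u_{q-1} \le \dim H_{q-1}(\cup_j X^{u-e_j}) - \dim \im \mer^u_{q-1},
\]
and Proposition \ref{prop:union2} (which is already proved) bounds the right-hand side precisely by $\sum_{i=1}^n \xi_i^{q-i}(u)$. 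Summing the two estimates gives the claim $c_q(u) \le \sum_{i=0}^n \xi_i^{q-i}(u)$.

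In contrast with the lower bound of Theorem \ref{thm:lower}, no correction term involving the higher differentials $d^r$ appears here, because the inequality $\dim E^n_{p,q} \le \dim E^2_{p,q}$ from Proposition \ref{prop:ineqEr+1} is in the favorable direction for an upper bound on $\dim H_*(\cup_j X^{u-e_j})$. For this reason I do not expect a genuine obstacle; the only point requiring care is the bookkeeping of the index shift between the homology degrees $q$ and $q-1$, so that $\xi_0^q(u)$ coming from the $\coker$ piece and the sum $\sum_{i=1}^n \xi_i^{q-i}(u)$ coming from the $\ker$ piece combine cleanly into the single sum $\sum_{i=0}^n \xi_i^{q-i}(u)$.
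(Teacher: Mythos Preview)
Your proposal is correct and follows essentially the same argument as the paper: both start from the rank-nullity decomposition $c_q(u) = \dim \coker i^u_q + \dim \ker i^u_{q-1}$ coming from the long exact sequence of the pair, use the factorization $\mer^u_q = i^u_q \circ \overline{\varepsilon}^u_q$ to bound $\dim \coker i^u_q \le \xi_0^q(u)$, and then invoke Proposition \ref{prop:union2} together with the same factorization in degree $q-1$ to bound $\dim \ker i^u_{q-1} \le \sum_{i=1}^n \xi_i^{q-i}(u)$.
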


\begin{proof}
Since $\mer_q^u=i^u_q \overline{\varepsilon}^u_q$, as in the commutative diagram (\ref{eq:def_mer}),
we have
\begin{equation} \label{eq:bound2} \dim \im \mer_q^u \le \dim \im i^u_q ,
\end{equation}
for all $q\in \Z$ and $u\in \Z^n$. For a fixed grade $u\in \Z^n$, as a consequence of a simple argument on the long exact sequence of relative homology of the pair $(X^u, \cup_{j} X^{u-e_j})$,  we can write, as we did before in (\ref{eq:cq_imi}),
\begin{equation*} 
\mu_q (u) =  (\dim H_q (X^u) -\dim \im i^u_q ) + ( \dim H_{q-1}(\cup_j X^{u-e_j}) -\dim \im i^u_{q-1} ) .
\end{equation*}
By (\ref{eq:bound2}), the first parenthesis in the right-hand term is upper bounded by $\dim H_q (X^u) -\dim \im \mer_q^u =\xi^{q}_{0}(u)$. For the second parenthesis we use Proposition \ref{prop:union2}
to see that
\[ \dim H_{q-1} (\cup_j X^{u-e_j})  \le \dim \im \mer^u_{q-1} +\sum_{i=1}^n \xi_i^{q-i}(u) ,
\]
and (\ref{eq:bound2}) to conclude that
\[  \dim H_{q-1}(\cup_j X^{u-e_j}) -\dim \im i^u_{q-1}  \le\sum_{i=1}^n \xi_i^{q-i}(u) .
\]
Putting together the inequalities for the two parentheses we obtain the stated upper bound for $\mu_q(u)$.
\end{proof}

\subsection{Sharpness of lower and upper bounds}

In this subsection we show that, for a fixed $u\in \Z^n$ and $q\ge 0$, the lower bound 
\[
\mu_q (u) \ge \xi^q_0 (u) + \xi^{q-1}_1 (u) - \sum_{i=1}^{n-1} \xi_{i+1}^{q-i} (u)
\] 
of Theorem \ref{thm:lower} and the upper bound
\[ \mu_q(u)  \le \sum_{i=0}^n \xi_i^{q-i}(u) 
\]
of Theorem \ref{thm:upper} for $\mu_q(u)$ in terms of the Betti tables are sharp. As the previous inequalities are trivially seen to be equalities in the case when $X$ consists of only one 0-cell, we aim at showing 
that equalities can be attained in situations in which any of the involved  $\xi_i^k(u)$  is non-zero. 
The examples we provide focusing on the case of filtrations with $n=3$ parameters are general enough to be easily generalized to any number of parameters, as shown for instance in Figure \ref{fig:sharp} where the same construction is repeated for $n=1$ and $q=0$, $n=2$ and $q=1$, $n=3$ and $q=2$, and can be easily inferred for $n>3$ and $q=n-1$.

\paragraph{Lower bound.} For $n=3$ and $q=2$ we show examples of filtrations $\{ X^u\}_{u\in \Z^n}$ in which,  for a fixed $u\in \Z^n$, $\mu_2(u) = \xi^2_0(u) + \xi^{1}_1(u) - \xi^{1}_2(u) - \xi^{0}_3(u)$ holds and all $\xi_i^k(u)$ of the right-hand side are  non-zero. 
First, let us notice that taking the disjoint union of two filtered cell complexes results in adding both their homological Morse numbers $\mu_q$ and their Betti tables. It is therefore enough to provide examples of the following cases:
\begin{enumerate}[label=(\roman*)]
  \item $\mu_2(u) = \xi^2_0(u)-\xi^0_3(u)$, with $\xi^2_0(u),\xi^0_3(u)>0$, 
  \item $\mu_2(u) = \xi^2_0(u)-\xi^1_2(u)$, with $\xi^2_0(u),\xi^1_2(u)>0$,
  \item $\mu_2(u) = \xi^2_0(u)+\xi^1_1(u)$, with $\xi^2_0(u),\xi^1_1(u)>0$.
\end{enumerate}

\begin{figure}[htb]
\caption{Filtrations with $n$-parameters ($1\le n\le 3$) of an $(n-1)$-sphere for which inequalities of Theorem \ref{thm:lower} are sharp.}
\begin{center}
\begin{tabular}{cc}
\begin{minipage}{0.5\linewidth}
\begin{tabular}{c}
\fbox{\includegraphics[scale=0.5]{./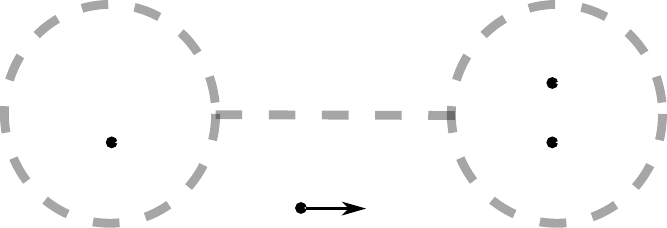}}\\
\fbox{\includegraphics[scale=0.5]{./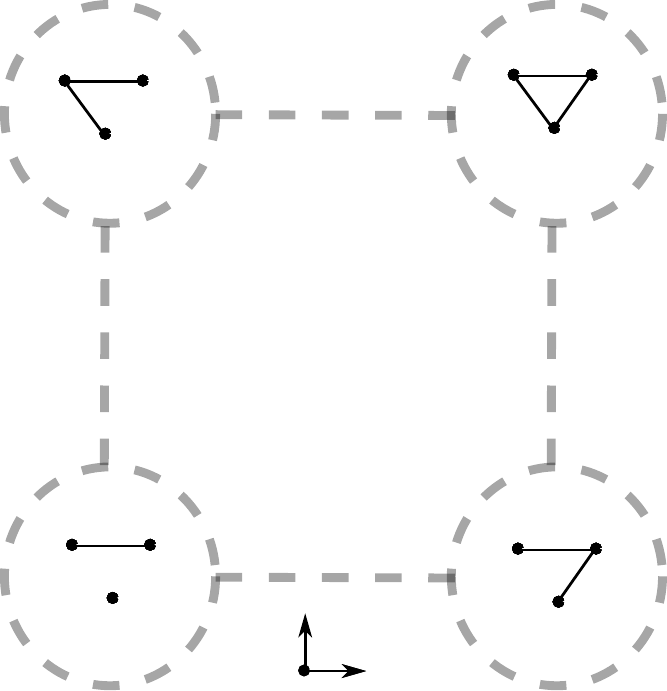}}
\end{tabular}
\end{minipage}
 &
\hspace{-1.2cm}\begin{minipage}{0.5\linewidth}
\fbox{ \includegraphics[scale=0.535]{./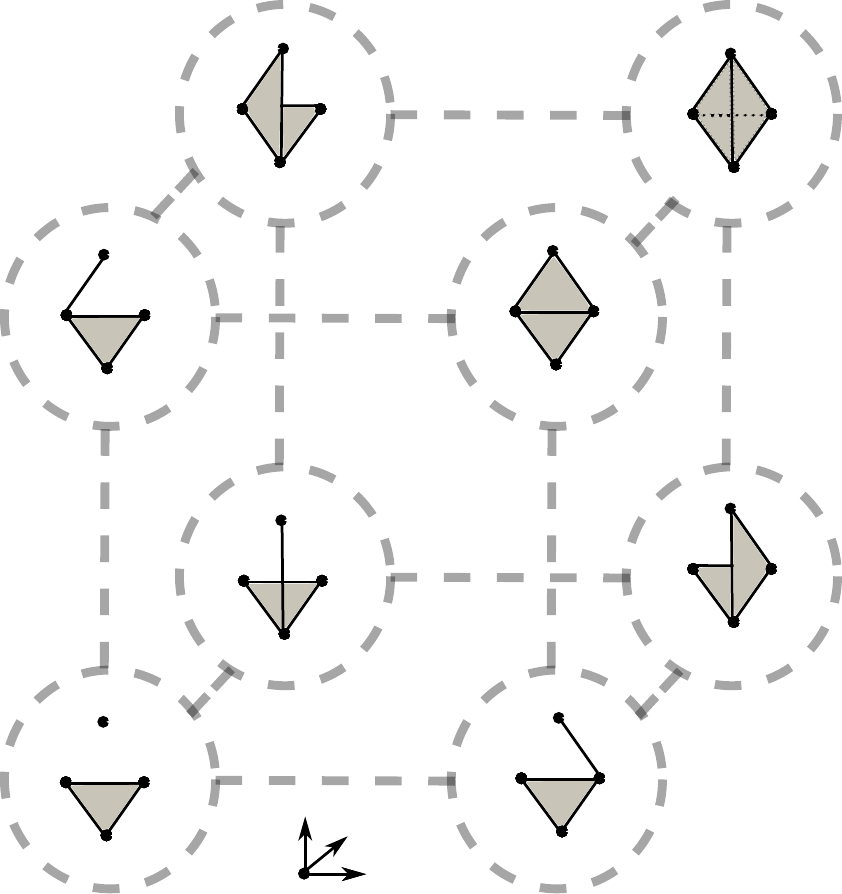}}
\end{minipage}
\end{tabular}
\end{center}
\label{fig:sharp}
\end{figure}

Cases (i) and (ii)  in particular illustrate the interesting situation of no critical cells entering at $u$ (which implies $\mu_2(u)=0$), with $\xi^2_0(u)=1$ being compensated by $\xi^0_3(u)=1$ and $\xi^1_2(u)=1$, respectively.  

For (i), consider Figure \ref{fig:sharp} (right), where $u$ is the maximum grade shown in the filtration, and only the grades $u-e_{\sigma}$ with $\sigma \subseteq \{ 1,2,3\}$ are shown. In this filtration, $X^u$ is homeomorphic to a $2$-sphere, triangulated as the boundary $\partial \Delta^3$ of a $3$-simplex. At  the minimum displayed grade $u-e_1-e_2-e_3$, only the union of the $0$-skeleton of $X^u$ and one of its $2$-faces has entered the filtration.  

For (ii), we can consider a similar filtration with at grade $u-e_1-e_2-e_3$ the union of the $1$-skeleton of $X^u=\partial \Delta^3$ and one of its $2$-faces. In this case, then, $X^{u-e_1-e_2}=X^{u-e_1-e_3}=X^{u-e_2-e_3}=X^{u-e_1-e_2-e_3}$.

Finally, for (iii), we can set $X^w=\emptyset$ at all grades $w$, except for $X^{u-e_1}\subseteq X^u$ which is the inclusion of $X^{u-e_1}\simeq S^1$ into $X^{u}\simeq S^2$ as its equator, with the entrance of $\mu_2(u)=2$ critical $2$-cells at $u$, and with $\xi^2_0(u)=\xi^1_1(u)=1$.

\paragraph{Upper bound.} For $n=3$ and $q=3$ we show examples of filtrations $\{ X^u\}_{u\in \Z^n}$ in which, for a fixed $u\in \Z^n$, $\mu_3(u) = \xi^3_0(u) + \xi^{2}_1(u) + \xi^{1}_2(u) + \xi^{0}_3(u)$ holds and all $\xi_i^k(u)$ are non-zero. 

Consider the cases (i) and (ii) we illustrated above. Adding a $3$-cell at grade $u$ so that $X^u=\Delta^3$ we have $\mu_3(u)=1$ and, respectively, $\xi^0_3(u)=1$ or $\xi^1_2(u)=1$, with the other Betti tables being zero. 

Mimicking (iii) described above, if  $X^w=\emptyset$ at all grades, except for $X^{u-e_1}\subseteq X^u$ being the inclusion of $X^{u-e_1}\simeq S^2$ into $X^{u}\simeq S^3$, then we have $\mu_3(u)=2$ and $\xi^3_0(u)=\xi^2_1(u)=1$.

\bigskip
{\bf Acknowledgments.}
We thank the anonymous referees for their constructive comments and suggestions.
This work was partially supported by the Wallenberg AI, Autonomous Systems and Software Program (WASP) funded by the Knut and Alice Wallenberg Foundation. This work was partially carried out by the last  author within the activities of ARCES (University of Bologna) and under the auspices of the GNSAGA, INdAM, Italy.


\end{document}